\theoremstyle{plain}
\newcommand{\C}{\mathbb{C}}
\newcommand{\F}{\mathcal{F}}
\newcommand{\sing}{\textsf{Sing}}
\newcommand{\tang}{{\rm tang}}
\newcommand{\mc}[1]{\mathcal{#1}}
\newtheorem{theorem}{Theorem}[section]
\newtheorem{maintheorem}{Theorem}
\newtheorem{secondtheorem}{Theorem}[section]
\newtheorem{lemma}[theorem]{Lemma}
\newtheorem{proposition}[theorem]{Proposition}
\newtheorem{corollary}[theorem]{Corollary}
\newtheorem{maincorollary}{Corollary}
\newtheorem{secondcorollary}{Corollary}
\theoremstyle{definition}
\newtheorem{definition}{Definition}[section]
\newtheorem{example}{Example}[section]
\newtheorem{remark}{Remark}[section]
\DeclareSymbolFont{matha}{OML}{txmi}{m}{it}% txfonts 
\DeclareMathSymbol{\varv}{\mathord}{matha}{118}
\begin{document}

\title[The Bruce-Roberts number of 1-forms along complex varieties]{The Bruce-Roberts number of holomorphic 1-forms along complex analytic varieties}

\author{Pedro Barbosa}
\address[Pedro Barbosa]{Departamento de Matem\'atica - ICEX, Universidade Federal de Minas Gerais, UFMG}
\curraddr{Av. Pres. Ant\^onio Carlos 6627, 31270-901, Belo Horizonte-MG, Brasil.}
\email{pedrocbj@ufmg.br}
\author{Arturo Fern\'andez-P\'erez}
\address[A. Fern\'andez-P\'erez]{Departamento de Matem\'atica - ICEX, Universidade Federal de Minas Gerais, UFMG}
\curraddr{Av. Pres. Ant\^onio Carlos 6627, 31270-901, Belo Horizonte-MG, Brasil.}
\email{fernandez@ufmg.br}

\author{V\'ictor Le\'on}
\address[V. Le\'on]{ILACVN - CICN, Universidade Federal da Integra\c c\~ao Latino-Americana, UNILA}
\curraddr{Parque tecnol\'ogico de Itaipu, Foz do Igua\c cu-PR, 85867-970 - Brasil}
\email{victor.leon@unila.edu.br}

\subjclass[2010]{Primary 32S65, 32V40}
\keywords{Bruce-Roberts number, holomorphic 1-forms, radial index, Euler obstruction, Milnor number, Tjurina number}
\thanks{The first author is supported by CAPES-Brazil. The second author acknowledges support from CNPq Projeto Universal 408687/2023-1 ``Geometria das Equa\c{c}\~oes Diferenciais Alg\'ebricas" and CNPq-Brazil PQ-306011/2023-9.}

\begin{abstract}
We introduce the notion of the \textit{Bruce-Roberts number} for holomorphic 1-forms relative to complex analytic varieties.  Our main result shows that the Bruce-Roberts number of a 1-form $\omega$ with respect to a complex analytic hypersurface $X$ with an isolated singularity can be expressed in terms of the \textit{Ebeling--Gusein-Zade index} of $\omega$ along $X$, the \textit{Milnor number} of $\omega$ and the \textit{Tjurina number} of $X$. This result allows us to recover known formulas for the Bruce-Roberts number of a holomorhic function along $X$ and to establish connections between this number, the radial index, and the local Euler obstruction of $\omega$ along $X$. Moreover, we present applications to both global and local holomorphic foliations in complex dimension two.
\end{abstract}

\maketitle
\tableofcontents
\section{Introduction and statement of the results}

Let $(X, 0)$ denotes the germ of a complex analytic variety at $(\mathbb{C}^n,0)$, and let $f: (\mathbb{C}^n, 0) \to (\mathbb{C}, 0)$ be the germ of a holomorphic function at $(\mathbb{C}^n,0)$. The \textit{Bruce-Roberts number} associated with $f$ relative to $(X, 0)$ was originally introduced by J.W. Bruce and R. M. Roberts in their seminal work \cite{BR}. This number, denoted by $\mu_{BR}(f,X)$, is defined as follows:
\[
\mu_{BR}(f,X) = \dim_{\mathbb{C}} \frac{{\mathcal{O}_n}}{{d f(\Theta_X)}},
\]
where $\mathcal{O}_n$ represents the local ring of holomorphic functions from $(\mathbb{C}^n, 0)$ to $(\mathbb{C},0)$, $df$ stands for the differential of $f$, and $\Theta_X$ is the $\mathcal{O}_n$-submodule of $\Theta_n$ consisting of holomorphic vector fields on $(\mathbb{C}^n, 0)$ that are tangent to $(X, 0)$ over their regular points. If $I_X\subset\mathcal{O}_n$ is the ideal of germs of holomorphic functions vanishing on $(X,0)$, then
\[
\Theta_X =\{\xi \in \Theta_n:\; dh(\xi)\in I_X,\;\forall\,\,h \in I_X\}.
\]
In particular, when $X = \mathbb{C}^n$, $df(\Theta_n)$ corresponds to the Jacobian ideal $Jf$ of $f$ which is generated by the partial derivatives of $f$ in $\mathcal{O}_n$.
Consequently, $\mu_{BR}(f, \C^n)$ coincides with the classical \textit{Milnor number} $\mu_0(f)$ of $f$. We recall that $\mu_0(f)$ is defined algebraically as the colength of the Jacobian ideal $Jf$ in $\mathcal{O}_n$.
Furthermore, if $X$ is the germ of a complex analytic subvariety at $(\mathbb{C}^n,0)$, then  $\mu_{BR}(f,X)$ is finite if and only if $f$ has an isolated singularity over $(X,0)$. 
The Bruce-Roberts number for holomorphic functions has been studied by many authors, see for instance \cite{Ahmed},  \cite{Nivaldo2009}, \cite{Nuno2013}, \cite{Nuno2020},  \cite{Bivia2020},  \cite{Tajima2021}, \cite{Lima2021}, \cite{Kourliouros2021}, \cite{Lima2024},  and \cite{Bivia2024}.
\par The purpose of this paper is to extend the definition of the \textit{Bruce-Roberts number} to \textit{holomorphic 1-forms relative to complex analytic varieties}. 
Our objective is to establish connections between the Bruce-Roberts number and 
 other well-known indices of 1-forms, such as the Ebeling--Gusein-Zade index, the radial index, the local Euler obstruction, and the Milnor number (Poincar\'e-Hopf type index). As a result, we offer a reinterpretation of these indices from the perspective of \textit{Foliation theory}. Furthermore, we present applications to both \textit{Foliations} and \textit{Singularity theory}, with a particular focus on complex dimension two. More precisely, let $\omega$ be the germ of a holomorphic 1-form with an isolated singularity at $0\in\mathbb{C}^n$, $n\geq 2$, and let $X$ be a germ of complex analytic variety with an isolated singularity at $0\in\mathbb{C}^n$. We define the \textit{Bruce-Roberts number} of the 1-form $\omega$ with respect to $X$ as
\begin{equation*}\label{def_1}
\mu_{BR}(\omega, X):= \dim_{\mathbb{C}}\dfrac{\mathcal{O}_n}{\omega(\Theta_X)}.
\end{equation*}
Note that $\mu_{BR}(\omega,X)$ is finite if and only if $\omega$ is a 1-form on $X$ admitting (at most) an isolated singularity at $0\in\C^n$. In the case of a  germ of a complex analytic subvariety $X$, this condition is equivalent to statement that
$X$ is not \textit{invariant} by $\omega$. We recall that $X$ is said to be \textit{invariant} by $\omega$, if $T_p X\subset Ker(\omega_p)$ for all regular point $p$ on $X$. Note also that, 
if $\omega=df$, for some $f\in\mathcal{O}_n$, then $\mu_{BR}(\omega,X)=\mu_{BR}(f,X)$.
\par For a germ of holomorphic 1-form $\omega=\displaystyle\sum^n_{i=1}A_i(x)dx_i$ with an isolated singularity at $0\in\C^n$, $n\geq 2$, we define the \textit{Milnor number} of $\omega$ at $0\in\C^n$ as 
\begin{equation*}\label{Milnor}
\mu_0(\omega):=\dim_\C \frac{\mathcal{O}_n}{\langle A_1,\ldots, A_n\rangle}.
\end{equation*}
\par It is worth noting that in the literature, the Milnor number of $\omega$ at $0\in\C^n$ is sometimes denoted as $\operatorname{ind}(\omega;\C^n,0)$, and referred to as the \textit{Poincar\'e-Hopf index} of $\omega$ at $0\in\C^n$; see, for instance, \cite[Definition 5.2.3]{Ebeling2023}. 
\par Analogous to the $GSV$-index (G\'omez-Mont--Seade--Verjovsky \cite{GSV}) for vector fields, Ebeling--Gusein-Zade introduced the notion of the $GSV$-index for a 1-form $\omega$ with respect to $X$ in \cite{Ebeling2001}, \cite{Ebeling}. This index is denoted by $\operatorname{Ind}_{\operatorname{GSV}}(\omega;X,0)$ and is related to the \textit{radial index} $\operatorname{Ind}_{\operatorname{rad}}(\omega;X,0)$ defined in \cite{Ebeling2004}, and the \textit{local Euler obstruction} $\operatorname{Eu}(\omega;X,0)$ defined in \cite{Ebeling2005}.
For further details, precise definitions, and additional results concerning these indices, see, for instance, Sections \ref{index1} and \ref{Euler}.
\par With this terminology, we now can formulate our first result:
\begin{maintheorem}\label{mainresult} 
Let $\omega$ be the germ of a holomorphic 1-form with isolated singularity at $0\in\mathbb{C}^n$, $n\geq 2$. Let $X$ be the germ of a complex analytic hypersurface with an isolated singularity at $0\in\C^n$.
 Assume that $X$ is not invariant by $\omega$. Then
\[ \mu_{BR}(\omega,X)=\operatorname{Ind}_{\operatorname{GSV}}(\omega;X,0)+\mu_0(\omega)-\tau_0(X),\]
where $\tau_0(X)$ is the \textit{Tjurina number} of $(X,0)$.  
\end{maintheorem}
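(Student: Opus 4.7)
The plan is to analyze the ideal $\omega(\Theta_X)$ directly by combining the classical structure of logarithmic vector fields along a hypersurface with Koszul arguments applied simultaneously to $f$ and to $\omega$. Writing $X=\{f=0\}$, I start from the short exact sequence
\[
0 \to \Theta_n^f \to \Theta_X \xrightarrow{\xi \mapsto \xi(f)} (f) \cap J_f \to 0
\]
of $\mathcal{O}_n$-modules, where $\Theta_n^f = \{\xi \in \Theta_n : \xi(f)=0\}$ and $J_f = \langle \partial_1 f,\ldots,\partial_n f\rangle$. The isolated singularity of $f$ makes $(\partial_1 f,\ldots,\partial_n f)$ a regular sequence in $\mathcal{O}_n$, so by exactness of the Koszul complex $\Theta_n^f$ is generated by the Hamiltonian vector fields $H_{ij} = \partial_j f\,\partial_i - \partial_i f\,\partial_j$. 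Hence $\omega(\Theta_n^f) = I_\omega$, the ideal generated by the coefficients $A_i\partial_j f - A_j\partial_i f$ of the $2$-form $\omega \wedge df$. Lifting each element $\sum c_i\partial_i f \in (f)\cap J_f$ back to $\Theta_X$ via the choice $\xi_c = \sum c_i\partial_i$ yields
\[
\omega(\Theta_X) \;=\; I_\omega \;+\; \Bigl\{\,\textstyle\sum_i c_i A_i \;:\; \sum_i c_i\partial_i f \in (f)\cap J_f \Bigr\}.
\]

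The key step is to organize this by introducing the $\mathcal{O}_n$-linear map
\[
\bar\omega \colon J_f \to J_\omega/I_\omega, \qquad \sum c_i\,\partial_i f \;\longmapsto\; \sum c_i A_i \bmod I_\omega,
\]
which is well-defined because any ambiguity in the $(c_i)$ comes from a Koszul syzygy of $(\partial_i f)$ whose image under $\omega$ lies in $I_\omega$. Surjectivity is immediate, and the central claim is that $\ker\bar\omega = I_\omega$: a direct chase shows $\ker\bar\omega = df(\ker\omega)$, and then the isolated singularity of $\omega$ (which makes $(A_1,\ldots,A_n)$ a regular sequence) ensures that $\ker\omega$ is Koszul-generated by $A_j\partial_i - A_i\partial_j$, whose $df$-images generate exactly $I_\omega$. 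This gives an isomorphism $J_f/I_\omega \xrightarrow{\sim} J_\omega/I_\omega$. Combined with the equality $\omega(\Theta_X)/I_\omega = \bar\omega((f)\cap J_f)$ from the previous paragraph, this isomorphism transports the cokernel to $J_f/((f)\cap J_f + I_\omega)$, and a routine diagram chase using $I_\omega \subset J_f$ identifies the latter with $(J_f + (f))/((f) + I_\omega)$.

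Taking $\C$-dimensions now gives $\dim J_\omega/\omega(\Theta_X) = \dim \mathcal{O}_n/(f, I_\omega) - \tau_0(X)$. Combining with the short exact sequence
\[
0 \to J_\omega/\omega(\Theta_X) \to \mathcal{O}_n/\omega(\Theta_X) \to \mathcal{O}_n/J_\omega \to 0
\]
together with the algebraic formula $\operatorname{Ind}_{\operatorname{GSV}}(\omega;X,0) = \dim \mathcal{O}_n/\langle f, I_\omega\rangle$ for the Ebeling--Gusein-Zade index (to be recalled in Section~\ref{index1}), we obtain $\mu_{BR}(\omega,X) = \mu_0(\omega) + \operatorname{Ind}_{\operatorname{GSV}}(\omega;X,0) - \tau_0(X)$. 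The main technical obstacle is the identification $\ker\bar\omega = I_\omega$: it simultaneously invokes the Koszul generation of both $\Theta_n^f$ (to define $\bar\omega$) and $\ker\omega$ (to control its kernel), and this interplay is what links the tangential ideal $J_f$ to the singular ideal $J_\omega$ via the wedge ideal $I_\omega$.
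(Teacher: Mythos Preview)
Your argument is correct and takes a genuinely different route from the paper's. The paper works with the submodule $\Theta_X^T\subset\Theta_X$ of trivial logarithmic fields (generated by $\phi\,\partial_i$ and the Hamiltonians $H_{ij}$), establishes the exact sequence
\[
0\longrightarrow \Theta_X/\Theta_X^T \xrightarrow{\ \cdot\omega\ } \mathcal{O}_n/\omega(\Theta_X^T) \longrightarrow \mathcal{O}_n/\omega(\Theta_X) \longrightarrow 0,
\]
computes $\dim_{\C}\mathcal{O}_n/\omega(\Theta_X^T)$ via a short colength lemma separating the factor $\phi$ from $(A_1,\ldots,A_n)$, and then invokes Tajima's theorem $\dim_{\C}\Theta_X/\Theta_X^T=\tau_0(X)$ as an external input. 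You instead bypass $\Theta_X^T$ entirely: starting from the structure sequence $0\to\Theta_n^f\to\Theta_X\to (f)\cap J_f\to 0$, you build the explicit isomorphism $\bar\omega\colon J_f/I_\omega\xrightarrow{\sim}J_\omega/I_\omega$ by exploiting Koszul exactness for \emph{both} regular sequences $(\partial_i f)$ and $(A_i)$, and read off $\tau_0(X)$ directly as $\dim_{\C}\mathcal{O}_n/(f,J_f)$. The paper's approach is shorter once Tajima's result is granted and keeps the trivial submodule $\Theta_X^T$ in the foreground; your approach is self-contained (no appeal to Tajima), makes the duality between $df$ and $\omega$ transparent through $\bar\omega$, and in fact yields as a by-product the identity $\dim_{\C} J_\omega/\omega(\Theta_X)=\operatorname{Ind}_{\operatorname{GSV}}(\omega;X,0)-\tau_0(X)=\mu_{BR}^-(\omega,X)$, anticipating the relative Bruce--Roberts formula proved separately later in the paper.
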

\par We recall that if $X$ is defined by $\phi:(\C^n,0)\to(\C,0)$, the Tjurina number of $X$ is given by
\[\tau_0(X)=\dim_{\C}\frac{\mathcal{O}_n}{\langle \phi,\frac{\partial{\phi}}{\partial{x}_1},\ldots,\frac{\partial{\phi}}{\partial{x}_n}\rangle}.\]
\par It is important to note that in some cases, determining $\Theta_X$ explicitly can be difficult. However, Theorem \ref{mainresult} provides a straightforward method for computing $\mu_{BR}(\omega,X)$.  Furthermore, since both $\operatorname{Ind}_{\operatorname{GSV}}(\omega;X,0)$ and $\mu_0(\omega)$ are topological invariants when $(X,0)$ is an isolated hypersurface singularity, Theorem \ref{mainresult} implies that $\mu_{BR}(\omega,X)$ is also a topological invariant under homeomorphisms of $(\C^{n},0)$ that fix $(X,0)$.  
\par As a corollary, we recover one of the main results from Nu\~{n}o-Ballesteros--Or\'efice-Okamoto--Pereira--Tomazella \cite[Corollary 4.1]{Nuno2020}:
\begin{maincorollary}\label{primercoro}
Let $X$ be an isolated hypersurface singularity defined by
$\phi :(\mathbb{C}^n,0)\to (\mathbb{C}, 0)$ and let $f\in\mathcal{O}_n$ has an isolated singularity over $(X,0)$. Then
\[\mu_{BR}(f,X)=\mu_0( f ) + \mu_0(X)+\mu_0(\phi, f ) -\tau_0(X),\]
where $\mu_0(X)$ is the Milnor number of $X$ and $\mu_0(\phi,f)$ is the Milnor number of the isolated complete intersection singularity defined by $(\phi,f)$, in the sense of Hamm \cite{Hamm}.  
\end{maincorollary}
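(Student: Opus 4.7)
The plan is to specialize Theorem~\ref{mainresult} to the exact 1-form $\omega = df$ and then identify the resulting GSV index with $\mu_0(X) + \mu_0(\phi,f)$ by means of the L\^e--Greuel formula. First, I would record two immediate observations that follow from the definitions: $\mu_{BR}(df,X) = \mu_{BR}(f,X)$ (noted already after the definition of $\mu_{BR}(\omega,X)$), and the components of $df$ are the partial derivatives of $f$, so $\mu_0(df) = \mu_0(f)$. The hypothesis that $f$ has an isolated singularity over $X$ is equivalent to $X$ not being invariant by $df$, so Theorem~\ref{mainresult} applies and yields
\[
\mu_{BR}(f,X) = \operatorname{Ind}_{\operatorname{GSV}}(df;X,0) + \mu_0(f) - \tau_0(X).
\]
The corollary therefore reduces to the identity
\[
\operatorname{Ind}_{\operatorname{GSV}}(df;X,0) = \mu_0(X) + \mu_0(\phi,f).
\]

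To establish this identity, I would invoke the algebraic description of the GSV index of a holomorphic 1-form on an isolated hypersurface singularity, due to Ebeling--Gusein-Zade: for any $\omega = \sum_i A_i\,dx_i$ with isolated singularity at $0$ and $X$ not invariant by $\omega$, one has
\[
\operatorname{Ind}_{\operatorname{GSV}}(\omega;X,0) = \dim_{\C}\frac{\mathcal{O}_n}{\langle \phi\rangle + J_\omega(\phi)},
\]
where $J_\omega(\phi)$ denotes the ideal generated by the $2\times 2$ minors of the $2\times n$ matrix whose rows are $(\partial\phi/\partial x_1,\ldots,\partial\phi/\partial x_n)$ and $(A_1,\ldots,A_n)$. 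Specializing to $\omega = df$, so that $A_i = \partial f/\partial x_i$, the ideal $J_\omega(\phi)$ becomes precisely the ideal $J(\phi,f)$ of $2\times 2$ minors of the Jacobian matrix of the map $(\phi,f)\colon(\C^n,0)\to(\C^2,0)$.

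Finally, I would apply the L\^e--Greuel formula for the isolated complete intersection singularity defined by $(\phi,f)$, which reads
\[
\mu_0(\phi,f) + \mu_0(\phi) = \dim_{\C}\frac{\mathcal{O}_n}{\langle \phi\rangle + J(\phi,f)},
\]
combined with the identification $\mu_0(\phi) = \mu_0(X)$ which holds by definition of the Milnor number of a hypersurface with isolated singularity. Together, the two displayed equalities give the required expression for $\operatorname{Ind}_{\operatorname{GSV}}(df;X,0)$, and substitution into the specialization of Theorem~\ref{mainresult} yields the corollary. The main point requiring care is the matching between the Ebeling--Gusein-Zade algebraic expression for the GSV index and the $2\times 2$-minor ideal appearing in L\^e--Greuel; once this identification is spelled out, the rest is algebraic substitution.
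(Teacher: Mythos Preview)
Your proposal is correct and follows essentially the same route as the paper: specialize Theorem~\ref{mainresult} to $\omega = df$, use $\mu_{BR}(df,X)=\mu_{BR}(f,X)$ and $\mu_0(df)=\mu_0(f)$, and then invoke the L\^e--Greuel formula (equation~(\ref{Le})) to rewrite $\operatorname{Ind}_{\operatorname{GSV}}(df;X,0)$ as $\mu_0(X)+\mu_0(\phi,f)$. Your extra care in matching the Ebeling--Gusein-Zade algebraic expression with the Jacobian-minor ideal in L\^e--Greuel is a helpful elaboration, but the paper simply cites equation~(\ref{Le}) directly.
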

\par By combining Theorem \ref{mainresult}, Propositions \ref{radial_gsv} and \ref{euler_radial}, we obtain the following corollary:
\begin{secondcorollary}\label{segundocoro}
    Let $\omega$ be a germ of a holomorphic 1-form with isolated singularity at $0\in\mathbb{C}^n$, $n\geq 2$. Let $X$ be a germ of a complex analytic hypersurface with an isolated singularity at $0\in\C^n$.
 Assume that $X$ is not invariant by $\omega$. Then
 \[\mu_{BR}(\omega,X)-\mu_0(\omega)+\tau_0(X)-\mu_0(X)-\operatorname{Eu}(\omega;X,0)=(-1)^{n-2}\bar{\chi}(M_{\ell}),\]
 where $M_{\ell}$ is the Milnor fiber of the generic linear function $\ell:\mathbb{C}^n\to\mathbb{C}$ on $X$ and $\bar{\chi}(M_{\ell})=\chi(M_{\ell})-1$. 
\end{secondcorollary}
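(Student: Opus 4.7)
The plan is to reduce the claim to a purely index-theoretic identity on an isolated hypersurface singularity by first eliminating the Bruce–Roberts number via Theorem~\ref{mainresult}. Substituting the formula
\[
\mu_{BR}(\omega,X)=\operatorname{Ind}_{\operatorname{GSV}}(\omega;X,0)+\mu_0(\omega)-\tau_0(X)
\]
into the left-hand side of the corollary, the terms $\mu_0(\omega)$ and $\tau_0(X)$ cancel exactly, and what remains to be proved becomes
\[
\operatorname{Ind}_{\operatorname{GSV}}(\omega;X,0)-\mu_0(X)-\operatorname{Eu}(\omega;X,0)=(-1)^{n-2}\bar{\chi}(M_{\ell}).
\]
This is a statement about classical indices of $\omega$ on $(X,0)$ and no longer involves $\Theta_X$.

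The next step is to decompose this reduced identity using the two propositions cited in the hypothesis. Proposition~\ref{radial_gsv} compares the GSV and radial indices on an isolated hypersurface singularity; I expect it to read $\operatorname{Ind}_{\operatorname{GSV}}(\omega;X,0)-\operatorname{Ind}_{\operatorname{rad}}(\omega;X,0)=\mu_0(X)$, which is exactly what is needed to eliminate $\mu_0(X)$ and rewrite the identity as
\[
\operatorname{Ind}_{\operatorname{rad}}(\omega;X,0)-\operatorname{Eu}(\omega;X,0)=(-1)^{n-2}\bar{\chi}(M_{\ell}).
\]
This last equation is then precisely Proposition~\ref{euler_radial} specialized to the isolated hypersurface case, where the difference between the radial index and the local Euler obstruction of a 1-form is governed by the reduced Euler characteristic of the Milnor fibre of a generic linear form on $X$. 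Chaining the three substitutions yields the corollary.

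The main obstacle, as in any formula of this kind, is bookkeeping of signs. The factor $(-1)^{n-2}$ in the statement must come entirely from Proposition~\ref{euler_radial}, so one must verify that the sign conventions used there (in particular the convention for $\operatorname{Eu}(\omega;X,0)$ and the definition of $\bar{\chi}(M_\ell)=\chi(M_\ell)-1$) are compatible with the ones used in Theorem~\ref{mainresult} for the GSV index, and that Proposition~\ref{radial_gsv} for hypersurfaces really contributes $+\mu_0(X)$ and not a signed variant. Once these conventions are aligned by referring back to Sections~\ref{index1} and \ref{Euler}, the proof is a one-line chain of substitutions.
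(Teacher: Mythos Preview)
Your proposal is correct and follows exactly the same route as the paper: apply Theorem~\ref{mainresult} to replace $\mu_{BR}(\omega,X)$ by $\operatorname{Ind}_{\operatorname{GSV}}(\omega;X,0)+\mu_0(\omega)-\tau_0(X)$, then Proposition~\ref{radial_gsv} to pass from the GSV index to the radial index plus $\mu_0(X)$, and finally Proposition~\ref{euler_radial} to identify $\operatorname{Ind}_{\operatorname{rad}}(\omega;X,0)-\operatorname{Eu}(\omega;X,0)$ with the reduced Euler characteristic term. The sign concern you raise is resolved by noting that in Proposition~\ref{euler_radial} the exponent is $\dim V-1$, and here $V=X$ has $\dim X=n-1$, so the exponent becomes $(n-1)-1=n-2$, matching the statement.
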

\par Following the definitions and topics explored in \cite{Lima2021}, we define the \textit{relative Bruce-Roberts} of the 1-form $\omega$ with respect to $X$ as
\begin{equation*}
\mu_{BR}^{-}(\omega,X):=\dim_{\C}\frac{\mathcal{O}_n}{\omega(\Theta_X)+I_X}, 
\end{equation*}
where $I_X\subset\mathcal{O}_n$ is the ideal of germs of holomorphic functions vanishing on $(X,0)$. 
\par A natural question arises regarding the relationship between $\mu_{BR}(\omega,X)$ and $\mu_{BR}^-(\omega,X)$.
The following theorem addresses this question:
\begin{secondtheorem}\label{correlthmBr}
Let $\omega$ be a germ of a holomorphic 1-form with an isolated singularity at $0\in\mathbb{C}^n$, $n\geq 2$. Let $X$ be a germ of a complex analytic hypersurface with an isolated singularity at $0\in\C^n$.
 Assume that $X$ is not invariant by $\omega$. Then
    \[\mu_{BR}(\omega,X)=\mu_0(\omega)+\mu_{BR}^-(\omega,X).\]
\end{secondtheorem}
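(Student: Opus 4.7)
The plan is to prove the identity $\mu_{BR}(\omega, X) = \mu_0(\omega) + \mu_{BR}^-(\omega, X)$ by extracting it from an elementary short exact sequence combined with a carefully constructed isomorphism. From the inclusion $\omega(\Theta_X)\subseteq \omega(\Theta_X)+I_X\subseteq\mathcal{O}_n$ I obtain the short exact sequence of finite-dimensional $\C$-vector spaces
\begin{equation*}
0 \to \frac{\omega(\Theta_X) + I_X}{\omega(\Theta_X)} \to \frac{\mathcal{O}_n}{\omega(\Theta_X)} \to \frac{\mathcal{O}_n}{\omega(\Theta_X) + I_X} \to 0,
\end{equation*}
which yields $\mu_{BR}(\omega, X) = \mu_{BR}^-(\omega, X) + \dim_\C (\omega(\Theta_X) + I_X)/\omega(\Theta_X)$, so the theorem reduces to proving that this last dimension equals $\mu_0(\omega)$.

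By the second isomorphism theorem $(\omega(\Theta_X) + I_X)/\omega(\Theta_X)\cong I_X/(I_X\cap\omega(\Theta_X))$, and since $\phi$ is a non-zerodivisor in the regular local ring $\mathcal{O}_n$, multiplication by $\phi$ yields an $\mathcal{O}_n$-isomorphism $\mathcal{O}_n\cong I_X$. Composing with the quotient gives the surjective $\C$-linear map
\begin{equation*}
\psi\colon \mathcal{O}_n/J\omega \twoheadrightarrow I_X/(I_X\cap\omega(\Theta_X)), \qquad g + J\omega \longmapsto g\phi + (I_X\cap\omega(\Theta_X)).
\end{equation*}
That $\psi$ is well defined amounts to the inclusion $\phi\,J\omega \subseteq \omega(\Theta_X)\cap I_X$: for $g=\omega(\eta)$ with $\eta\in\Theta_n$, the vector field $\phi\eta$ lies in $\Theta_X$ because $d\phi(\phi\eta) = \phi\cdot\eta(\phi)\in I_X$, hence $g\phi=\omega(\phi\eta)\in \omega(\Theta_X)\cap I_X$. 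The theorem then follows once I show that $\psi$ is also injective, equivalently $\omega(\Theta_X)\cap I_X = \phi\,J\omega$.

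For the injectivity, I take $\xi \in \Theta_X$ with $\omega(\xi) = g\phi$ and aim to show $g \in J\omega$. Writing $d\phi(\xi) = \lambda\phi$ for some $\lambda \in \mathcal{O}_n$, the interior-product identity gives
\begin{equation*}
\iota_\xi(\omega\wedge d\phi) = \omega(\xi)\, d\phi - d\phi(\xi)\,\omega = \phi\,(g\, d\phi - \lambda\omega).
\end{equation*}
Two structural inputs drive the argument: first, the isolated singularity of $\omega$ makes $A_1,\dots,A_n$ a regular sequence in $\mathcal{O}_n$, so the Koszul complex of $\omega$ is exact and the module of syzygies of $\omega$ is generated by the elementary syzygies $A_i\partial_j - A_j\partial_i$; second, the non-invariance of $X$ under $\omega$, equivalent to $\omega\wedge d\phi \notin \phi\,\Omega^2_n$, prevents the displayed identity from being degenerate. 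Together they should allow me to produce a decomposition $\xi = \phi\eta + \zeta$ with $\eta\in\Theta_n$ and $\omega(\zeta) = 0$, giving $g=\omega(\eta)\in J\omega$.

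The main obstacle will be carrying out this decomposition uniformly. In general $\Theta_X$ carries logarithmic generators beyond $\phi\,\Theta_n$ and the Hamiltonian fields $\partial_j\phi\cdot\partial_i - \partial_i\phi\cdot\partial_j$, particularly when $\phi$ is not quasi-homogeneous, and controlling their contributions to $\omega(\Theta_X)\cap I_X$ is the delicate point. Should this direct approach prove too intricate, a viable alternative is to combine Theorem \ref{mainresult} with an independent identification $\mu_{BR}^-(\omega, X) = \operatorname{Ind}_{\operatorname{GSV}}(\omega; X, 0) - \tau_0(X)$, established by techniques parallel to those in the proof of Theorem \ref{mainresult}, and then deduce the theorem by subtraction.
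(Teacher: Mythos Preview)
Your framework is exactly the paper's: the same short exact sequence, then the second isomorphism theorem reducing everything to the identity $\omega(\Theta_X)\cap I_X=\phi\,J\omega$ (this is item~(ii) of the paper's Theorem~\ref{relthmBR}, and item~(iii) there is precisely your isomorphism $\psi$). The easy inclusion $\phi\,J\omega\subseteq\omega(\Theta_X)\cap I_X$ is also argued the same way.

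The genuine gap is the reverse inclusion, which you yourself flag. Your contracted identity $\iota_\xi(\omega\wedge d\phi)=\phi\,(g\,d\phi-\lambda\omega)$ is the right starting point, but neither Koszul exactness of $(A_1,\dots,A_n)$ nor non-invariance alone yields the decomposition $\xi=\phi\eta+\zeta$ with $\omega(\zeta)=0$; in fact that decomposition is \emph{equivalent} to $g\in J\omega$, so it cannot be obtained for free. What the paper does: reading your identity coefficientwise gives $\phi\,(g\,\partial_i\phi-\lambda A_i)\in(\omega,d\phi)$, the ideal of $2\times2$ minors. Since $\operatorname{Ind}_{\operatorname{GSV}}(\omega;X,0)<\infty$ (equivalently $\dim V((\omega,d\phi))\le 1$, which is where \emph{both} your structural inputs enter simultaneously), $\phi$ is a non-zerodivisor modulo $(\omega,d\phi)$, so $g\,\partial_i\phi-\lambda A_i\in(\omega,d\phi)$ for every $i$. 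The paper then invokes Lemma~\ref{lima_lema} (the 1-form analogue of \cite[Lemma~2.2]{Lima2021}): under this codimension hypothesis, if all $2\times2$ minors of the augmented matrix $\left(\begin{smallmatrix} g & A_1 & \cdots & A_n\\ \lambda & \partial_1\phi & \cdots & \partial_n\phi\end{smallmatrix}\right)$ already lie in $(\omega,d\phi)$, then the column $(g,\lambda)^T$ lies in the module generated by the remaining columns, whence $g\in J\omega$. This matrix lemma, proved by a depth argument rather than by Koszul exactness of $\omega$ alone, is the missing ingredient in your sketch.

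Your proposed alternative route is circular as stated: in the paper the identity $\mu_{BR}^-(\omega,X)=\operatorname{Ind}_{\operatorname{GSV}}(\omega;X,0)-\tau_0(X)$ (Corollary~\ref{relmainthm}) is \emph{derived from} the present theorem together with Theorem~\ref{mainresult}. One can make it independent via item~(i) of Theorem~\ref{relthmBR}, but the proof of that item again rests on Lemma~\ref{lima_lema}, so you are led back to the same technical core either way.
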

Note that if we set $\omega=df$ in the preceding theorem, we recover the formula
\[\mu_{BR}(f,X)=\mu_0(f)+\mu^-_{BR}(f,X)\]
as found in \cite[Section 3]{Lima2021}.
\par Section \ref{foliations} is dedicated to applying our results to germs of holomorphic foliations on $(\C^2,0)$.
The first observation is that, for a germ of holomorphic foliation $\F$ defined by $\omega\in\Omega^{1}(\C^2,0)$, and for $(X,0)$ a germ of a complex analytic curve not invariant by $\F$, we have 
\begin{equation}\label{eq_2}
\operatorname{Ind}_{\operatorname{GSV}}(\omega;X,0)=\tang(\F,X,0),
\end{equation}
where $\tang(\F,X,0)$ denotes the \textit{tangency order} of $\F$ along $X$ defined by Brunella \cite[Chapter 2, Section 2]{Brunella-book}. Then
\[\mu_{BR}(\F,X)=\mu_0(\F)+\operatorname{tang}(\F,X,0)-\tau_0(X),\]
by Theorem \ref{mainresult}, see for instance Corollary \ref{dim2}.  Continuing in Proposition \ref{tangirred}, we obtain 
\[  \operatorname{tang}(\F,X,0)=\text{ord}_{t=0}\  \varphi^* \omega+\mu_0(X),\]
where $\varphi$ is a Puiseux parametrization of $X$.  Combining  this with the equation (\ref{eq_2}) and Proposition \ref{radial_gsv}, we get 
\[ \operatorname{Ind}_{\operatorname{rad}}(\omega;X,0)=\text{ord}_{t=0}\  \varphi^* \omega.\]
This provides a straightforward interpretation of the radial index in dimension two. We also present blow-ups formulas for $\mu_{BR}(\omega,X)$ and $\mu_{BR}^-(\omega,X)$ and provide several examples. Additionally, we establish a characterization of when a non-dicritical foliation $\F$ is a generalized curve foliation in terms of $\mu_{BR}(\F,X)$ and $\mu_{BR}^-(\F,X)$; see Corollary \ref{generalized}. Finally, in Section \ref{global}, we derive a global formula for the sum of the Bruce-Roberts numbers of a global foliation on a compact complex surface $S$, see Theorem \ref{global_1}. As a consequence, we obtain an upper bound to the global Tjurina number of $X$ in terms of the characteristic numbers of $S$, $\F$ and $X$.
\par It is worth noting that while we only use indices formulas for 1-forms along a complex analytic hypersurface with an isolated singularity, such formulas also apply to 1-forms along isolated complete intersection singularities (ICIS). In a future paper, we will investigate the Bruce-Roberts number of 1-forms along ICIS. For the case of exact 1-forms, see the recent paper \cite{Lima2024}.

\section{Ebeling--Gusein-Zade index of a holomorphic 1-form}\label{index1}
In this section, we present definitions and results related to the index of a holomorphic 1-forms with respect to an isolated complete intersection singularity. As noted in \cite[p. 133]{Ebeling2006}, the concept of considering indices of holomorphic 1-forms was first proposed by V.I. Arnold \cite{Arnold}. 
In this paper, we use the algebraic definition of this index specifically for the case where $X$ is a complex hypersurface with an isolated singularity at $0\in\C^n$. For a more comprehensive account of this theory, the reader can refer to \cite{Ebeling}.  
 
Let $\omega$ be a germ of a holomorphic $1$-form, given by $\omega=\displaystyle\sum_{i=1}^{n+k} A_i(x)dx_i$, where $A_i \in \mathcal{O}_{n+k}$ in $(\C^{n+k},0)$. Let $(V,0) \subset (\C^{n+k},0)$ be an isolated complete intersection singularity (ICIS), defined by an analytic map $f=(f_1,\ldots,f_k):(\C^{n+k},0) \rightarrow (\C^k,0)$. We set $N=n+k$. 
\par Let $U$ be a neighborhood of the origin in $\C^{N}$ where all the functions $f_i$ $(i=1,\ldots,k)$ and the 1-form $\omega$ are defined.
Let $S_{\delta}\subset U$ be a sufficiently small sphere around the origin which intersects $V$ transversally. Let $K=V\cap S_{\delta}$ be the link of $(V,0)$. 
The set $\{\omega(z), df_1(z), . . . , df_k(z)\}$ is a $(k+ 1)$-frame in the space
dual to $\C^{n+k}$ for all $z\in K$. Therefore, one has a map
\[\Psi = (\omega, df_1, \ldots, df_k) : K\to W_{N ,k+1},\]
 here $W_{N ,k+1}$ is the Stiefel manifold of $(k + 1)$-frames in the space dual to
$\C^{N}$. 
\begin{definition}
The  index $\operatorname{Ind}_{\operatorname{GSV} }(\omega;V,0)$ of the 1-form $\omega$ on the ICIS $V$
at the origin is the degree of the map
\[\Psi = (\omega, df_1, \ldots , df_k) : K\to W_{N ,k+1}.\]
\end{definition}
Note that $\operatorname{Ind}_{\operatorname{GSV} }(\omega;V,0)$ is finite if and only if the 1-forms $\omega,df_{1},\ldots,df_k$ are linearly independent for all points over $K$.
\par According to \cite{Ebeling2001} or \cite[Theorem 5.3.35]{Ebeling2023}, the index $\operatorname{Ind}_{\operatorname{GSV} }(\omega;V,0)$ can also be defined by
\begin{align*}
\operatorname{Ind}_{\operatorname{GSV} }(\omega;V,0):=\dim_{\C} \frac{\mathcal{O}_{\C^N,0}}{I},
\end{align*}
where $I$ is the ideal generated by $f_1,\ldots,f_k$ and the $(k+1) \times (k+1)$-minors of the matrix
\begin{align*}
\left( \begin{array}{ccc}
\displaystyle\frac{\partial f_1}{\partial x_1} & \cdots & \displaystyle\frac{\partial f_1}{\partial x_{N}} \\
\vdots & \cdots & \vdots \\
\displaystyle\frac{\partial f_k}{\partial x_1} & \cdots & \displaystyle\frac{\partial f_k}{\partial x_{N}} \\
A_1 & \cdots & A_{N}
\end{array} \right).
\end{align*}

For example, when $k=1, \ n=1$ and $N=2$, if $\omega=A dx+B dy$ and $V=X=\{ \phi=0 \}$, we have that $I$ is generated by $\phi$ and the $2 \times 2$ minors of 
\begin{align*}
\left( \begin{array}{cc}
\displaystyle\frac{\partial \phi}{\partial x} & \displaystyle\frac{\partial \phi}{\partial y} \\
A & B
\end{array} \right),
\end{align*}
i.e., $I=\langle \phi, \frac{\partial \phi}{\partial x} B- \frac{\partial \phi}{\partial y} A \rangle$. Then, we have
\begin{align}\label{tangency}
\operatorname{Ind}_{\operatorname{GSV} }(\omega;X,0)=\dim_{\C} \frac{\mathcal{O}_{\C^2,0}}{\langle \phi, \frac{\partial \phi}{\partial x} B- \frac{\partial \phi}{\partial y} A \rangle}.
\end{align}
\par Let $X$ be an isolated hypersurface singularity defined by
$\phi:(\C^n,0)\to(\C,0)$, and let $f:(\C^n,0)\to(\C,0)$ be a holomorphic function with an isolated singularity on $(X,0)$. If $\displaystyle\omega=\sum_{j=1}^{n}A_j dz_j$, then we can write 
\begin{equation}\label{index}
\operatorname{Ind}_{\operatorname{GSV} }(\omega;X,0)=\dim_{\mathbb{C}}\dfrac{\mathcal{O}_n}{\langle\phi,\frac{\partial\phi}{\partial x_j}A_k-\frac{\partial\phi}{\partial x_k}A_j\rangle_{(j,k)\in \Lambda}}
\end{equation}
where $\Lambda=\{(j,k);\;j,k=1,\ldots,n,\;j\neq k\}$.
Moreover, according to L\^e-Greuel formula (\cite{Greuel} or \cite{Le1974}), we have
\begin{equation}\label{Le}
   \operatorname{Ind}_{\operatorname{GSV} }(df;X,0)=\mu_0(X)+\mu_0(\phi,f).
\end{equation}
\section{Radial index and the Euler Obstruction}\label{Euler}
In this section, we give some definitions and known results about the radial index and the local Euler obstruction of a 1-form. For a more detailed account of a general theory of indices of vector fields and 1-forms, we refer the reader to \cite{Ebeling2023}. We will use these results in the case of complex hypersurfaces with an isolated singularity. 
\par Let $V$ be a closed (real) subanalytic variety in a smooth manifold $M$, where $M$ is
equipped with a (smooth) Riemannian metric. Let 
$V=\cup_{i=1}^{q} V_i$ be a subanalytic
Whitney stratification of $V$ (see \cite{Trotman} for this notion). Let $\omega$ be the germ at $p\in\C^{N}$ of a (continuous) 1-form on $(V, p)$, i.e. the
restriction to $V$ of a 1-form defined in a neighborhood of the point $p$ in the ambient
manifold $M$. Let $V=\cup_{i=1}^{q} V_i$ be a subanalytic Whitney stratification of $V$. 
For each $p\in V$, set $V_{(p)} = V_i$ be the stratum containing $p$.
A point $p\in V$ is a singular point of $\omega$ if the restriction of 
$\omega$ to the stratum $V_{(p)}$ containing
$p$ vanishes at the point $p$. 
\begin{definition}
The germ $\omega$ of a 1-form at the point $p$ is called \textit{radial} if, for all $\epsilon>0$
small enough, the 1-form is positive on the outward normals to the boundary of the
$\epsilon$-neighborhood of the point $p$.
\end{definition}

Let $p\in V_i = V_{(p)}$, $\dim V_{(p)} = k$, and let $\eta$ be a 1-form defined in a
neighborhood of the point $p$. As above, let $N_i$ be a normal slice (with respect to the Riemannian metric) of $M$ to the stratum $V_i$ at the point $p$ and $h$ a diffeomorphism
from a neighborhood of $p$ in $M$ to the product $U_{i}(p)\times N_i$, where $U_{i}(p)$ is an
$\epsilon$-neighborhood of $p$ in $V_i$, which is the identity on $U_{i}(p)$. 
 \par A 1-form $\eta$ is called a \textit{radial extension} of the 1-form $\eta|_{V_{(p)}}$
if there
exists such a diffeomorphism $h$ which identifies $\eta$ with the restriction to $V$ of the 1-
form $\pi_1^{*}\eta|_{V_{(p)}}+\pi_2^{*}\eta^{\operatorname{rad}}_{N_{i}}$, where $\pi_1$ and $\pi_2$ are the projections from a neighborhood of $p$ in $M$ to $V_{(p)}$ and $N_i$ respectively, and $\eta^{\operatorname{rad}}_{N_{i}}$
 is a radial 1-form on $N_i$.
\par For a 1-form $\omega$ on $(V,p)$ with an isolated singular point at the point $p$ there
exists a 1-form $\tilde{\omega}$ on $V$ such that 
 \begin{enumerate}
\item $\tilde{\omega}$ coincides with $\omega$ on a neighborhood of the intersection of $V$
with the boundary $\partial B_{\epsilon}$ of the $\epsilon$-neighborhood around the point $p$;
\item the 1-form $\tilde{\omega}$ has a finite number of singular points (zeros);
\item in a neighborhood of each singular point $q\in V\cap B_{\epsilon}$, $q \in V_i$, 
the 1-form $\tilde{\omega}$ is a radial extension of its restriction to the stratum $V_i$.
\end{enumerate}
\begin{definition}
The \textit{radial index} $\operatorname{Ind}_{\operatorname{rad}}(\omega;V,p)$ of the 1-form $\omega$ at the point $p$ is 
\[\operatorname{Ind}_{\operatorname{rad}}(\omega;V,p)=\sum_{q\in \sing(\tilde{\omega})}\operatorname{ind}(\tilde{\omega}|_{V_{(q)}};V_{(q)},q)\]
where $\operatorname{ind}(\tilde{\omega}|_{V_{(q)}};V_{(q)},q)$ is the usual index of the restriction of the 1-form $\tilde{\omega}$ to the stratum $V_{(q)}$, see \cite[Definition 5.2.3]{Ebeling2023}.
\end{definition}
The radial index $\operatorname{Ind}_{\operatorname{rad}}(\omega;V,p)$ is well-defined, see \cite[Proposition 5.3.9]{Ebeling2023}. It follows from the definition that $\operatorname{Ind}_{\operatorname{rad}}(\omega;V,p)$ satisfies the \textit{law of conservation of number}, and moreover one has a Poincar\'e-Hopf type theorem \cite[Theorem 5.3.10]{Ebeling2023}: for a compact real subanalytic variety $V$ and a 1-form $\omega$ with isolated singular points on $V$, we have
\[\sum_{q\in \sing(\omega)}\operatorname{Ind}_{\operatorname{rad}}(\omega;V,q)=\chi(V),\]
where $\chi(V)$ denotes the \textit{Euler characteristic} of the space (variety) $V$.
\par For 1-form $\omega$ one has the following proposition proved in \cite[Proposition 2.8]{Ebeling2004}.
\begin{proposition}\label{radial_gsv}
For a 1-form $\omega$ on an ICIS $(V,0)$ we have
\[\operatorname{Ind}_{\operatorname{GSV} }(\omega;V,0)=\operatorname{Ind}_{\operatorname{rad}}(\omega;V,0)+\mu_0(V),\]
where $\mu_0(V)$ denotes the Milnor number of $(V,0)$, in the sense of Hamm \cite{Hamm}. 
\end{proposition}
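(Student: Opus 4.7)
The plan is to prove Proposition \ref{radial_gsv} by comparing the counts of zeros of a generic perturbation of $\omega$ on the singular variety $V\cap B_{\delta}$ versus on a smoothing (Milnor fibre) $V_t$ of $(V,0)$, using the classical topological fact $\chi(V_t)=1+(-1)^n\mu_0(V)$, where $n=\dim_{\mathbb{C}}V$.

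First, I would replace $\omega$ by a generic nearby 1-form $\tilde{\omega}$ that coincides with $\omega$ in a neighborhood of the link $K=V\cap S_{\delta}$ and has only non-degenerate isolated zeros on both $V_t$ and on the smooth locus of $V\cap B_{\delta}$. The law of conservation of number for the radial index (Proposition 5.3.9 of \cite{Ebeling2023}) together with the boundary-degree characterization of the GSV-index show that neither $\operatorname{Ind}_{\operatorname{GSV}}(\omega;V,0)$ nor $\operatorname{Ind}_{\operatorname{rad}}(\omega;V,0)$ is affected by this perturbation, so it suffices to establish the identity for $\tilde{\omega}$.

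Next, I would reinterpret each side as a Poincar\'e--Hopf count. By the very definition of the GSV-index as the degree of the Stiefel-frame map $\Psi=(\omega,df_1,\ldots,df_k)$ on $K$, $\operatorname{Ind}_{\operatorname{GSV}}(\omega;V,0)$ equals the total number of zeros of $\tilde{\omega}|_{V_t}$, each weighted by its local holomorphic index. Taking $\tilde{\omega}$ to be a radial extension of $\omega$ near $0$ from the Whitney stratification of $V$, the radial index decomposes as the contribution at $0$ plus the local indices at the new zeros on the smooth stratum. Those zeros on the smooth locus of $V\cap B_{\delta}$ are in natural bijection with the zeros of $\tilde{\omega}|_{V_t}$ lying outside a small neighborhood of $0$ (via a standard Ehresmann-type argument applied to the family $\{f=s\}_{s}$), and so their contributions cancel in the difference between the two sides.

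The residual discrepancy is concentrated near $0$ and is precisely the Poincar\'e--Hopf deficit between $V_t$ and $V\cap B_{\delta}$. Viewing $\tilde{\omega}$ as a holomorphic section of $\Omega^1$ and using $c_n(\Omega^1)=(-1)^n c_n(T)$, Poincar\'e--Hopf gives a count equal to $(-1)^n\chi$ in each case, so the difference becomes $(-1)^n\bigl(\chi(V_t)-\chi(V\cap B_{\delta})\bigr)=(-1)^n\cdot(-1)^n\mu_0(V)=\mu_0(V)$, yielding the claimed identity. The main obstacle is the sign-bookkeeping between the complex convention of the GSV-index and the real subanalytic convention of the radial index, together with checking that the perturbation $\tilde{\omega}$ simultaneously satisfies the boundary-degree requirement on $K$ and the radial-extension requirement near the origin; once these compatibilities are in place, the argument reduces to the classical Euler-characteristic computation for the Milnor fibre of an ICIS.
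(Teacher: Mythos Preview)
The paper does not actually prove Proposition~\ref{radial_gsv}; it is quoted verbatim as \cite[Proposition 2.8]{Ebeling2004} and used as a black box. So there is no ``paper's own proof'' to compare your sketch against.

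That said, your outline is essentially the standard argument from the Ebeling--Gusein-Zade--Seade paper: both $\operatorname{Ind}_{\operatorname{GSV}}$ and $\operatorname{Ind}_{\operatorname{rad}}$ satisfy the law of conservation of number and coincide with the ordinary Poincar\'e--Hopf index at smooth points, so their difference is a constant depending only on the germ $(V,0)$; one then identifies this constant with $\mu_0(V)$ via the Euler characteristic of the Milnor fibre. Your write-up captures this, though a couple of points are loose. First, the ``natural bijection'' between zeros of $\tilde{\omega}$ on $V_{\mathrm{reg}}\cap B_\delta$ and on $V_t$ away from $0$ is not quite the right formulation: what you actually use is that both indices localize and agree at smooth zeros, so after a common perturbation their difference is supported at the origin. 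Second, the Chern-class line ``$c_n(\Omega^1)=(-1)^n c_n(T)$'' is unnecessary and slightly misleading here, since for a \emph{holomorphic} 1-form each non-degenerate zero already contributes $+1$ to the GSV count; the cleanest way to pin down the constant is to evaluate both indices on a radial 1-form, for which $\operatorname{Ind}_{\operatorname{rad}}=1$ and $\operatorname{Ind}_{\operatorname{GSV}}=1+\mu_0(V)$ by the Milnor-fibre interpretation. With those adjustments your argument matches the one in \cite{Ebeling2004}.
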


Now, we define the \textit{local Euler obstruction} of a singular point of a 1-form following \cite[Section 5.3.5]{Ebeling2023}. The initial idea was given by MacPherson \cite{Mac} who defined the \textit{Euler obstruction} of a singular point of a complex analytic variety. For a recent reference where the reader can found an explicit definition, we refer \cite{Callejas}.

\par We start with the definition of the Nash transformation of a germ of a singular variety. Let $(V,0)\subset(\C^N,0)$ be the germ of a purely $n$-dimensional complex analytic variety. We assume that $V$ is a representative of $(V,0)$ defined in a suitable neighborhood $U$ of $0\in\C^N$. Let $G(n,N)$ be the Grassmann manifold of $n$-dimensional vector subspaces of $\C^N$. Let $V_{reg}$ be the non-singular part of $V$. There is a natural map $\sigma:V_{reg}\to U\times G(n,N)$ which is defined by $\sigma(z)=(z,T_{z} V_{reg})$. The \textit{Nash transform} $\widehat{V}$ is the closure of the image $\operatorname{Im}\sigma$ of the map $\sigma$ in $U\times G(n,N)$. It is a usually singular analytic variety. There is the natural base point map $\nu:\widehat{V}\to V$. Let $\widehat{V}':=\widehat{V}\setminus \nu^{-1}(V\setminus V_{reg})$. Then the restriction $\nu|_{\widehat{V}'}$ maps $\widehat{V}'$ biholomorphically to $V_{reg}$. 
\par The \textit{Nash bundle} $\widehat{T}$ over $\widehat{V}$  is the pullback of the tautological bundle on the
Grassmann manifold $G(n, N )$ under the natural projection map 
$\widehat{V}\to G(n, N )$.  It
is a vector bundle of rank $n$. There is a natural lifting of the Nash transformation to
a bundle map from the Nash bundle $\widehat{T}$ to the restriction of the tangent bundle $T\C^{N}$
of $\C^N$ to $V$. This is an isomorphism of $\widehat{T}$ and $TV_{reg}\subset T\C^{N}$ over the regular part
$V_{reg}$ of $V$.
Let $V=\bigcup_{i=1}^{q} V_{i}$ be a subanalytic Whitney stratification of $V$ and let $\omega$ be a 1-form on $U$ with an isolated singular point on $V$ at the origin. Let $\epsilon$ be small enough such that the 1-form $\omega$ has no singular points on $V\setminus\{0\}$ inside
the ball $B_{\epsilon}$. The 1-form $\omega$ gives rise to a section $\widehat{\omega}$ of the dual Nash bundle $\widehat{T}^{*}$ over the Nash transform $\widehat{V}$ without zeros outside of $\nu^{-1}(0)$.  The following definition was given in \cite{Ebeling2005}. 
\begin{definition}
The \textit{local Euler obstruction} $\operatorname{Eu}(\omega; V,0)$ of the 1-form $\omega$ on $V$ at $0\in\C^N$ is the obstruction to extend the non-zero section $\widehat{\omega}$ from the pre-image of a neighborhood of the sphere $S_{\epsilon}=\partial B_{\epsilon}$ to the pre-image of its interior. More precisely, its value (as an element of the cohomology group $H^{2n}(\nu^{-1}(V\cap B_{\epsilon}),\nu^{-1}(V\cap S_{\epsilon}),\mathbb{Z})$) on the fundamental class of the pair $[(\nu^{-1}(V\cap B_{\epsilon}),\nu^{-1}(V\cap S_{\epsilon}))]$. 
\end{definition}
\par The Euler obstruction of a 1-form can be considered as an index. In particular, it satisfies the law of conservation of number (just as the radial index). Moreover, on a smooth variety the Euler obstruction and the radial index coincide. 
We set $\bar{\chi}(Z):=\chi(Z)-1$ and call it the \textit{reduced} (modulo a point) Euler characteristic of the topological space $Z$ (though, strictly speaking, this name is only correct for a non-empty space $Z$). 
We have the following result from \cite[Proposition 5.3.32]{Ebeling2023}.
\begin{proposition}\label{euler_radial}
Let $(V,0)\subset(\C^N,0)$ have an isolated singularity at $0\in\C^N$ and let $\ell:\C^N\to \C$ be a generic linear function. Then
\[\operatorname{Ind}_{\operatorname{rad}}(\omega;V,0)-\operatorname{Eu}(\omega;V,0)=\operatorname{Ind}_{\operatorname{rad}}(d\ell;V,0)=(-1)^{n-1}\bar{\chi}(M_{\ell}),\]
where $M_{\ell}$ is the Milnor fiber of the linear function $\ell$ on $V$. In particular, if $f$ is a germ of holomorphic function with an isolated critical point on $(V,0)$, then 
\[\operatorname{Eu}(df;V,0)=(-1)^n(\chi(M_{\ell})-\chi(M_f)),\]
where $M_f$ is the Milnor fiber of $f$.
\end{proposition}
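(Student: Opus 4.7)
The plan is to break the proposition into two parts: the main identity
\[\operatorname{Ind}_{\operatorname{rad}}(\omega;V,0)-\operatorname{Eu}(\omega;V,0)=\operatorname{Ind}_{\operatorname{rad}}(d\ell;V,0)=(-1)^{n-1}\bar{\chi}(M_{\ell}),\]
and its specialization to $\omega=df$. I would attack the main identity in three steps: (i) show that the difference $D(\omega):=\operatorname{Ind}_{\operatorname{rad}}(\omega;V,0)-\operatorname{Eu}(\omega;V,0)$ depends only on the germ $(V,0)$, not on $\omega$; (ii) verify $\operatorname{Eu}(d\ell;V,0)=0$ for a generic linear $\ell$; and (iii) compute $\operatorname{Ind}_{\operatorname{rad}}(d\ell;V,0)$ via Morse theory on the Milnor fibration.

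Step (i) would follow from the law of conservation of number satisfied by both indices, combined with the fact that on the smooth stratum $V_{reg}$ both indices reduce to the ordinary Poincar\'e--Hopf index. Concretely, by connecting $\omega$ to $d\ell$ through a continuous family $\omega_t$, any zero that migrates off $0$ into $V_{reg}$ contributes identically to $\operatorname{Ind}_{\operatorname{rad}}$ and $\operatorname{Eu}$, hence cancels in $D$, yielding $D(\omega)=D(d\ell)$. For (ii), the limits of tangent $n$-planes to $V_{reg}$ at $0$ form a proper closed subvariety of the Grassmannian $G(n,N)$; for generic $\ell$ none of these limit planes lies in $\ker(\ell)$, so the lifted section $\widehat{d\ell}$ of $\widehat{T}^{*}$ is nowhere zero on $\nu^{-1}(0)$ and, by continuity, on a whole neighborhood of it, whence its obstruction class vanishes. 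Combining (i) and (ii) gives the first equality.

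For step (iii), the same generic transversality shows that $d\ell$ has no zeros on $V_{reg}\cap B_{\epsilon}$, so its only singularity on $V\cap B_{\epsilon}$ lies in the stratum $\{0\}$. Using the Milnor fibration $\ell:V\cap B_{\epsilon}\cap\ell^{-1}(D_{\delta}^{*})\to D_{\delta}^{*}$ with fiber $M_{\ell}$, the cone relation $\chi(V\cap B_{\epsilon})=1$, and the alternating-sign conventions appropriate to complex Morse theory, one arrives at $\operatorname{Ind}_{\operatorname{rad}}(d\ell;V,0)=(-1)^{n-1}\bar{\chi}(M_{\ell})$, the sign $(-1)^{n-1}$ reflecting the complex dimension $n-1$ of the Milnor fiber. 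The second assertion of the proposition then follows by applying the first to $\omega=df$: running the identical Milnor-fibration argument with $f$ in place of $\ell$ gives $\operatorname{Ind}_{\operatorname{rad}}(df;V,0)=(-1)^{n-1}\bar{\chi}(M_{f})$, and subtracting with $\bar{\chi}=\chi-1$ produces $\operatorname{Eu}(df;V,0)=(-1)^{n}(\chi(M_{\ell})-\chi(M_{f}))$.

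The main obstacle is step (iii): constructing a stratified radial extension of $d\ell$ near $0$, controlling zeros that appear in the transition region between the radial model near $0$ and $d\ell$ near the boundary, and tracking the signs so that the count matches $(-1)^{n-1}\bar{\chi}(M_{\ell})$ rather than the naive Poincar\'e--Hopf quantity $\chi(V\cap B_{\epsilon})-\chi(M_{\ell})=1-\chi(M_{\ell})=-\bar{\chi}(M_{\ell})$, which differs from it when $n$ is odd. Once this sign discrepancy is pinned down (for example by first verifying the formula for $V=\C^{n}$ and for a nodal plane curve, then propagating by conservation of number), the remaining pieces fit together formally.
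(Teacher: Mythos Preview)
The paper does not prove this proposition: it is quoted verbatim as \cite[Proposition~5.3.32]{Ebeling2023} and used as a black box. There is therefore no ``paper's own proof'' to compare against.

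That said, your sketch is essentially the argument one finds in the original Ebeling--Gusein-Zade papers. Steps~(i) and~(ii) are correct and standard: the conservation-of-number argument for the difference $D(\omega)$ is exactly how independence from $\omega$ is established, and the vanishing $\operatorname{Eu}(d\ell;V,0)=0$ for generic $\ell$ is the defining transversality condition going back to MacPherson. Your identification of step~(iii) as the delicate point is also accurate. However, your proposed resolution---``verify the formula for $V=\C^{n}$ and for a nodal plane curve, then propagate by conservation of number''---is not a proof: conservation of number lets you move within a fixed germ $(V,0)$, not between different germs, so checking two examples does not determine the sign for arbitrary $(V,0)$. The honest route is to compute $\operatorname{Ind}_{\operatorname{rad}}(d\ell;V,0)$ directly from the definition: a radial $1$-form on the normal slice at $0$ (which here is all of $V$) has radial index $+1$, and comparing it to $d\ell$ along $\partial B_{\epsilon}$ via a homotopy whose intermediate zeros lie on $V_{reg}$ yields $1-\chi(M_{\ell})$ with the sign fixed by the complex orientation; the $(-1)^{n-1}$ then comes from the convention that $\operatorname{ind}(df;\C^{n},0)=(-1)^{n}\mu_{0}(f)$ for the Poincar\'e--Hopf index of a $1$-form, as in \cite[Definition~5.2.3]{Ebeling2023}. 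Once that convention is made explicit the computation is routine.
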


\section{Proof of Theorem \ref{mainresult}}
First, we state a technical lemma 
inspired by \cite[Lemma 6.1]{FPGBSM}:

\begin{lemma} \label{newlema61}
Let $f_1, \ldots, f_m,g,p_1,\ldots, p_n \in \mathcal{O}_n$, where $f_i$ and $g$ are relatively prime, for any $i \in \{ 1,\ldots,m \}$. Then
\begin{eqnarray*}
\dim_{\C} \frac{\mathcal{O}_n}{\langle f_1,\ldots,f_m,gp_1,\ldots,gp_n\rangle}&=&
\dim_{\C} \frac{\mathcal{O}_n}{\langle f_1,\ldots,f_m,p_1,\ldots,p_n\rangle}\\
& & +\dim_{\C} \frac{\mathcal{O}_n}{\langle f_1,\ldots,f_m,g\rangle}.
\end{eqnarray*}
\end{lemma}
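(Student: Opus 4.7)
My plan is to reduce the claimed equality to the additivity of $\dim_{\C}$ along a short exact sequence. Setting $I := \langle f_1, \ldots, f_m \rangle$ and $J := \langle p_1, \ldots, p_n \rangle$, the three quotients appearing in the statement correspond to the ideals $I + J$, $I + gJ$ and $I + \langle g \rangle$, and the natural candidate sequence is
\[
0 \longrightarrow \frac{\mathcal{O}_n}{I + J} \xrightarrow{\ \mu_g\ } \frac{\mathcal{O}_n}{I + gJ} \xrightarrow{\ \pi\ } \frac{\mathcal{O}_n}{I + \langle g \rangle} \longrightarrow 0,
\]
where $\mu_g([h]) := [gh]$ is well defined because $g(I+J) \subseteq I + gJ$, and $\pi$ is the canonical projection (well defined because $I + gJ \subseteq I + \langle g \rangle$). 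Once I verify exactness, the claimed formula will follow at once from additivity of $\dim_{\C}$ in short exact sequences of $\C$-vector spaces.

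Three of the four exactness checks will be routine: surjectivity of $\pi$ is tautological; the composition $\pi \circ \mu_g$ vanishes because $gh \in \langle g \rangle$; and if $\pi([h]) = 0$ then $h = a + g h'$ for some $a \in I$ and $h' \in \mathcal{O}_n$, so $[h] = [g h'] = \mu_g([h'])$, giving exactness in the middle. The substantive step is the injectivity of $\mu_g$: if $gh \in I + gJ$, write $gh = \sum_i a_i f_i + g\sum_j b_j p_j$, so that $g\bigl(h - \sum_j b_j p_j\bigr) \in I$, and to conclude $h \in I + J$ I need multiplication by $g$ to be injective on $\mathcal{O}_n/I$, i.e.\ that $g$ is a non-zero-divisor modulo $I$. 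This is the only place where the coprimality hypothesis genuinely enters.

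The main obstacle is therefore this non-zero-divisor property. In the application to Theorem \ref{mainresult} only the principal case $m=1$ is used, and there the argument is clean: since $\mathcal{O}_n$ is a regular local ring, hence a unique factorization domain, the associated primes of $\mathcal{O}_n/\langle f_1 \rangle$ are exactly the height-one primes $\langle q \rangle$ generated by the irreducible factors $q$ of $f_1$; coprimality of $f_1$ and $g$ forces $g \notin \langle q \rangle$ for each such $q$, and hence $g$ is a non-zero-divisor modulo $\langle f_1 \rangle$. For the general case the same scheme applies, the pairwise coprimality hypothesis together with a primary decomposition of $I$ in the UFD $\mathcal{O}_n$ playing the analogous role.

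Once the non-zero-divisor step is secured and the displayed sequence is exact, the standard identity $\dim_{\C} B = \dim_{\C} A + \dim_{\C} C$ for $0 \to A \to B \to C \to 0$ closes the argument, yielding the stated decomposition of $\dim_{\C} \mathcal{O}_n / \langle f_1, \ldots, f_m, gp_1, \ldots, gp_n\rangle$.
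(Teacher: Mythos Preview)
Your approach is essentially the paper's: both set up the same short exact sequence (the paper first passes to $\mathcal{O}=\mathcal{O}_n/\langle f_1,\dots,f_m\rangle$ and writes it as $0\to \mathcal{O}/\langle p_i'\rangle\xrightarrow{\cdot g'}\mathcal{O}/\langle g'p_i'\rangle\to\mathcal{O}/\langle g'\rangle\to 0$, which is exactly your sequence under the third isomorphism theorem) and then invoke additivity of $\dim_{\C}$. You are in fact more careful than the paper, which simply asserts the sequence is exact; you correctly isolate the one substantive verification, namely that multiplication by $g$ is injective on $\mathcal{O}_n/I$, and your argument for $m=1$ via associated primes in the UFD $\mathcal{O}_n$ is clean and correct.

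The genuine gap is your treatment of $m>1$. Pairwise coprimality of $g$ with each $f_i$ does \emph{not} force $g$ to be a non-zero-divisor on $\mathcal{O}_n/\langle f_1,\dots,f_m\rangle$, so ``the same scheme applies via primary decomposition'' cannot be made to work from the stated hypothesis alone. Concretely, in $\mathcal{O}_2$ take $f_1=x$, $f_2=y$, $g=x+y$, $p_1=x$: each $f_i$ is coprime to $g$, yet the formula would read $1=1+1$. Thus the lemma as phrased is actually false for general $m$, and neither your sketch nor the paper's one-line proof closes this. In the paper's sole application (proof of Theorem~\ref{mainresult}) one has $g=\phi$ and $I=(\omega,d\phi)$ the ideal of $2\times2$ minors; there $\phi$ \emph{is} a non-zero-divisor, but for the extra reason that $(\omega,d\phi)$ is a determinantal ideal of the expected height $n-1$, hence Cohen--Macaulay and unmixed, and $V(\phi)\cap V(\omega,d\phi)=\{0\}$ (the paper itself invokes ``$\phi$ is regular in $\mathcal{O}_n/(\omega,d\phi)$'' later, in the proof of Theorem~\ref{relthmBR}). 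So your exact sequence is valid in the intended application, but for geometric reasons that go beyond the bare coprimality hypothesis of the lemma.
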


\begin{proof}
Observe that \[\displaystyle\dim_{\C} \frac{\mathcal{O}_n}{\langle f_1,\ldots,f_m,r_1,\ldots,r_n\rangle}=\dim_{\C} \frac{\mathcal{O}}{\langle r_1',\ldots,r_n'\rangle},\] with $\mathcal{O}=\displaystyle\frac{\mathcal{O}_n}{\langle f_1,\ldots,f_m\rangle}$ and $r_i'=r_i+\langle f_1,\ldots,f_m\rangle$, for any $i \in \{ 1,\ldots,n \}$ and any $r_i \in \mathcal{O}_n$. The proof is completed using the following exact sequence:
\begin{align*}
0 \longrightarrow \frac{\mathcal{O}}{\langle p_1',\ldots,p_n'\rangle} \xrightarrow{ \ \sigma  \ } \ \frac{\mathcal{O}}{\langle g'p_1',\ldots,g'p_n'\rangle} \ \xrightarrow{ \ \delta  \ } \frac{\mathcal{O}}{\langle g'\rangle} \longrightarrow 0,
\end{align*}
where $\sigma(z'+\langle p_1',\ldots,p_n'\rangle)=g'z'+\langle g'p_1',\ldots,g'p_n'\rangle$ and $\delta(z'+\langle g'p_1',\ldots,g'p_n'\rangle)=z'+\langle g'\rangle$ for any $z' \in \mathcal{O}$.
\end{proof}
\begin{remark}\label{theta}
    We consider $\Theta_X^T$ the $\mathcal{O}_n$-submodule of $\Theta_X$ generated by the trivial vector fields, that is, $\Theta_X^T$ is generated by
\[
\phi\dfrac{\partial}{\partial x_i},\; \dfrac{\partial\phi}{\partial x_j}\dfrac{\partial}{\partial x_k}-\dfrac{\partial\phi}{\partial x_k}\dfrac{\partial}{\partial x_j},\;\;\mbox{with }i,j,k = 1,\ldots, n;\; k \neq j,
\]
where $\phi:(\mathbb{C}^n,0)\to (\mathbb{C},0)$ is the defining equation of $(X,0)$. 
\end{remark} 

\begin{lemma}\label{exact1}
The following sequence of $\mathbb{C}$-vector spaces is exact
\[0\longrightarrow\dfrac{\Theta_X}{\Theta_X^T}{\overset {\cdot\omega}{\longrightarrow}}\dfrac{\mathcal{O}_n}{\omega(\Theta_X^T)}{\overset {\beta}{\longrightarrow}}\dfrac{\mathcal{O}_n}{\omega(\Theta_X)}\longrightarrow0,\]
where $\beta$ is induced by the inclusion $\omega(\Theta_X^T)\subset\omega(\Theta_X)$, and $\cdot\omega$ is the evaluation map.
\begin{proof} 
It is clear that the sequence is well-defined. Let us now prove that it is an exact sequence. 
Suppose $f\in\operatorname{Im}(\omega)$. Then, there exists $\eta\in\Theta_X$ such that $f =\omega(\eta+\Theta_X^T)=\omega(\eta) +\omega(\Theta^T_X)$. Hence,
\[\beta(f)=\beta(\omega(\eta)+\omega(\Theta_X^T))=\beta(\omega(\eta)) +\omega(\Theta_X).\]   
Since $\eta\in\Theta_X$, we have $\omega(\eta)\in\omega(\Theta_X)$, and thus, $\beta(\omega(\eta))=\omega(\eta)\in\omega(\Theta_X)$. Consequently, it follows that $f\in \operatorname{Ker}(\beta)$. On the other hand, consider $g\in\operatorname{Ker}(\beta)$. Then, $g = f +\omega(\Theta_X^T)$, where $f\in \mathcal{O}_n$ and $\beta(g)\in\omega(\Theta_X)$. This implies
\[f + \omega(\Theta_X) =\beta(f + \omega(\Theta_X^T)) = \beta(g)\in\omega(\Theta_X),\]
therefore, $f\in\omega(\Theta_X)$, i.e., $f = \omega(\xi)$, where $\xi\in\Theta_X$. Thus, we have $f + \omega(\Theta_X) = \omega(\xi +\Theta_X^T)$, and therefore $g\in\operatorname{Im}(\omega)$.
Consequently, $\operatorname{Ker}(\beta)=\operatorname{Im}(\omega)$. This establishes the exactness of the sequence and verifies the validity of the lemma.
\end{proof}
\end{lemma}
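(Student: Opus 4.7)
The plan is to verify the three standard ingredients of a short exact sequence of $\mathbb{C}$-vector spaces: well-definedness of both maps, exactness at the middle term, and the bookend injectivity and surjectivity. Well-definedness of $\cdot\omega$ on the quotient follows because $\omega$ tautologically sends $\Theta_X^T$ into $\omega(\Theta_X^T)$, so two representatives of the same class in $\Theta_X/\Theta_X^T$ yield the same class in $\mathcal{O}_n/\omega(\Theta_X^T)$; well-definedness of $\beta$ is immediate from the inclusion $\omega(\Theta_X^T)\subseteq\omega(\Theta_X)$.

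Surjectivity of $\beta$ is the easiest step: since $\beta$ is the natural projection between two quotients of the common ring $\mathcal{O}_n$ (by nested submodules), any representative $f\in\mathcal{O}_n$ lifts to itself. For exactness at the middle term I would prove both inclusions directly. For $\operatorname{Im}(\cdot\omega)\subseteq\operatorname{Ker}(\beta)$, given $\eta\in\Theta_X$ the class $\omega(\eta)+\omega(\Theta_X^T)$ is sent by $\beta$ to $\omega(\eta)+\omega(\Theta_X)=0$, since $\omega(\eta)\in\omega(\Theta_X)$. Conversely, if $g+\omega(\Theta_X^T)\in\operatorname{Ker}(\beta)$ then $g\in\omega(\Theta_X)$, so writing $g=\omega(\xi)$ with $\xi\in\Theta_X$ gives $g+\omega(\Theta_X^T)=(\cdot\omega)(\xi+\Theta_X^T)$, as required.

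The main obstacle is injectivity of $\cdot\omega$: given $\eta\in\Theta_X$ with $\omega(\eta)\in\omega(\Theta_X^T)$, one must conclude $\eta\in\Theta_X^T$. Writing $\omega(\eta)=\omega(\tau)$ for some $\tau\in\Theta_X^T$ and subtracting, this reduces to the key claim that $\ker(\omega\mid_{\Theta_X})\subseteq\Theta_X^T$. My plan is to use the hypothesis that $\omega$ has an isolated singularity, which forces $A_1,\dots,A_n$ to be a regular sequence in $\mathcal{O}_n$; by the Koszul theorem the syzygies of $(A_1,\dots,A_n)$ are generated over $\mathcal{O}_n$ by the trivial ones, so any element of $\ker(\omega\mid_{\Theta_n})$ has the form $\sum_{i<j}b_{ij}\bigl(A_j\tfrac{\partial}{\partial x_i}-A_i\tfrac{\partial}{\partial x_j}\bigr)$. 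Imposing the tangency condition $\eta(\phi)\in(\phi)$ together with the non-invariance hypothesis (which prevents the minors $A_j\tfrac{\partial\phi}{\partial x_i}-A_i\tfrac{\partial\phi}{\partial x_j}$ from lying in $(\phi)$ simultaneously in a nontrivial way) should force enough divisibility of the coefficients $b_{ij}$ by $\phi$ that the resulting field is expressible as an $\mathcal{O}_n$-combination of the generators $\phi\tfrac{\partial}{\partial x_i}$ and $\tfrac{\partial\phi}{\partial x_j}\tfrac{\partial}{\partial x_k}-\tfrac{\partial\phi}{\partial x_k}\tfrac{\partial}{\partial x_j}$ of $\Theta_X^T$. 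The hard part will be handling this combinatorial step cleanly in higher dimension, where the Koszul generators satisfy second-order syzygies; a clean approach would be to argue modulo $\phi$, reducing the problem to a statement about syzygies on the smooth part of $X$ and then extending by flatness or local duality.
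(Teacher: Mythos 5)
Your verification of well-definedness, of the surjectivity of $\beta$, and of exactness at the middle term is correct and is essentially identical to what the paper's own proof contains: the paper proves only $\operatorname{Ker}(\beta)=\operatorname{Im}(\cdot\omega)$ and declares everything else clear. You are right that the substantive point is the injectivity of $\cdot\omega$, which reduces (after subtracting a representative $\tau\in\Theta_X^T$) to the claim $\ker(\omega)\cap\Theta_X\subseteq\Theta_X^T$; this is exactly the step the paper passes over in silence, even though the dimension count in the proof of Theorem \ref{mainresult} depends on it. Your argument for $n=2$ (Koszul syzygy plus non-invariance forcing $\phi\mid b_{12}$) is correct.

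For $n\ge 3$, however, your proposed mechanism has a genuine gap. Writing $\xi=\sum_{i<j}b_{ij}K_{ij}$ with $K_{ij}=A_j\frac{\partial}{\partial x_i}-A_i\frac{\partial}{\partial x_j}$ via the Koszul resolution is fine, and the tangency condition becomes $\sum b_{ij}m_{ij}\in(\phi)$ with $m_{ij}=d\phi(K_{ij})$; but the conclusion you aim for, that the $b_{ij}$ are ``forced'' to be divisible by $\phi$, is false in general, because the tuple $(b_{ij})$ is only constrained modulo the syzygies of the minors $\bar m_{ij}$ over $\mathcal{O}_X=\mathcal{O}_n/(\phi)$, and these syzygies are nontrivial. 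The missing input is determinantal: since $X$ is not invariant, $\dim_{\C}\mathcal{O}_n/\langle\phi,m_{ij}\rangle<\infty$, so the ideal of $2\times2$ minors of the matrix with rows $(A_i)$ and $(\frac{\partial\phi}{\partial x_i})$ has the expected grade $n-1$ in the Cohen--Macaulay ring $\mathcal{O}_X$, and the Eagon--Northcott (Buchsbaum--Rim) complex is exact; hence every relation $\sum \bar b_{ij}\bar m_{ij}=0$ is an $\mathcal{O}_X$-combination of $A_im_{jk}-A_jm_{ik}+A_km_{ij}=0$ and $\frac{\partial\phi}{\partial x_i}m_{jk}-\frac{\partial\phi}{\partial x_j}m_{ik}+\frac{\partial\phi}{\partial x_k}m_{ij}=0$. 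Substituting back into $\xi$, the first family yields the identically zero vector field, the second yields $A_iT_{jk}-A_jT_{ik}+A_kT_{ij}$ with $T_{jk}=\frac{\partial\phi}{\partial x_j}\frac{\partial}{\partial x_k}-\frac{\partial\phi}{\partial x_k}\frac{\partial}{\partial x_j}\in\Theta_X^T$, and the remaining ambiguity lies in $\phi\cdot\Theta_n\subseteq\Theta_X^T$; this closes the argument. Your appeal to ``flatness or local duality'' will not by itself produce this syzygy description, so as written the injectivity step is not proved.
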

\subsection{Proof of Theorem \ref{mainresult}}
From Lemma \ref{exact1}, we obtain
\begin{equation}\label{Eq1}\mu_{BR}(\omega,X)=\dim_{\mathbb{C}}\dfrac{\mathcal{O}_n}{\omega(\Theta_X^T)}-\dim_{\mathbb{C}}\dfrac{\Theta_X}{\Theta_X^T}.\end{equation}
Note that, by Remark \ref{theta} and Lemma \ref{newlema61}, we have
\[\begin{array}{rl}\dim_{\mathbb{C}}\dfrac{\mathcal{O}_n}{\omega(\Theta_X^T)}&=\dim_{\mathbb{C}}\dfrac{\mathcal{O}_n}{\langle\phi A_1,\ldots,\phi A_n,\frac{\partial\phi}{\partial x_j}A_k-\frac{\partial\phi}{\partial x_k}A_j\rangle_{(j,k)\in \Lambda}}\\&\\&=\dim_{\mathbb{C}}\dfrac{\mathcal{O}_n}{\langle\phi,\frac{\partial\phi}{\partial x_j}A_k-\frac{\partial\phi}{\partial x_k}A_j\rangle_{(j,k)\in\Lambda}}\\&\\&\;\;+\dim_{\mathbb{C}}\dfrac{\mathcal{O}_n}{\langle A_1,\ldots,A_n,\frac{\partial\phi}{\partial x_j}A_k-\frac{\partial\phi}{\partial x_k}A_j\rangle_{(j,k)\in \Lambda}}.
\end{array}\]
Since
\[\frac{\partial\phi}{\partial x_j}A_k-\frac{\partial\phi}{\partial x_k}A_j\in \langle A_1,\ldots, A_n\rangle,\mbox{ for all }(j,k)\in \Lambda,\]
we get
\[\dim_{\mathbb{C}}\dfrac{\mathcal{O}_n}{\langle A_1,\ldots,A_n,\frac{\partial\phi}{\partial x_j}A_k-\frac{\partial\phi}{\partial x_k}A_j\rangle_{(j,k)\in \Lambda}}=\dim_{\mathbb{C}}\dfrac{\mathcal{O}_n}{\langle A_1,\ldots,A_n\rangle}=\mu_0(\omega).\]
Thus, we deduce
\begin{equation}\label{Eq2}
\dim_{\mathbb{C}}\dfrac{\mathcal{O}_n}{\omega(\Theta_X^T)}= \dim_{\mathbb{C}}\dfrac{\mathcal{O}_n}{\langle\phi,\frac{\partial\phi}{\partial x_j}A_k-\frac{\partial\phi}{\partial x_k}A_j\rangle_{(j,k)\in\Lambda}}+\mu_0(\omega).   
\end{equation}
Finally, since $\dim_{\mathbb{C}}\dfrac{\Theta_X}{\Theta_X^T}=\tau_0(X)$ by Tajima's theorem \cite{Tajima}, the proof of Theorem \ref{mainresult} concludes by
substituting \eqref{Eq2} in \eqref{Eq1} and considering equation (\ref{index}).

As a consequence, we recover one of the main results from Nu\~{n}o-Ballesteros--Or\'efice-Okamoto--Pereira-Tomazella \cite[Corollary 4.1]{Lima2021}. 
\begin{maincorollary}
Let $X$ be an isolated hypersurface singularity defined by
$\phi :(\mathbb{C}^n,0)\to (\mathbb{C}, 0)$ and let $f\in\mathcal{O}_n$ with an isolated singularity over $(X,0)$. Then,
\[\mu_{BR}(f,X)=\mu_0( f )+\mu_0(\phi, f ) + \mu_0(X) -\tau_0(X).\]
\end{maincorollary}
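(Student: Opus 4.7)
The corollary is essentially a direct specialization of Theorem \ref{mainresult} to the exact $1$-form $\omega = df$, combined with the L\^e-Greuel formula recorded in equation \eqref{Le}. The plan is therefore to verify that each of the three invariants appearing in Theorem \ref{mainresult} translates correctly to the function setting, and then to rewrite the GSV-index term using L\^e-Greuel.

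First I would check the translation of invariants. Since $df(\Theta_X)$ is precisely the ideal used in the original Bruce-Roberts definition for functions, we have $\mu_{BR}(df, X) = \mu_{BR}(f, X)$. The coefficients of $df$ are the partial derivatives $\partial f/\partial x_i$, which generate the Jacobian ideal $Jf$, so $\mu_0(df) = \mu_0(f)$. The hypothesis that $f$ has an isolated singularity over $(X,0)$ guarantees both that $df$ has an isolated zero at the origin and that $X$ is not invariant by $df$ (otherwise $df$ would vanish on the regular part of $X$, contradicting the isolated singularity assumption), placing us exactly in the setting of Theorem \ref{mainresult}.

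Applying Theorem \ref{mainresult} with $\omega = df$ yields
\[ \mu_{BR}(f, X) = \operatorname{Ind}_{\operatorname{GSV}}(df; X, 0) + \mu_0(f) - \tau_0(X). \]
Next I would invoke the L\^e-Greuel formula \eqref{Le}, namely
\[ \operatorname{Ind}_{\operatorname{GSV}}(df; X, 0) = \mu_0(X) + \mu_0(\phi, f), \]
and substitute it into the previous display to obtain the claimed identity
\[ \mu_{BR}(f, X) = \mu_0(f) + \mu_0(\phi, f) + \mu_0(X) - \tau_0(X). \]

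There is no genuine obstacle to overcome here, since the substantive work has already been absorbed into the proof of Theorem \ref{mainresult} and into the L\^e-Greuel theorem. The only delicate point is the bookkeeping check that the ``isolated singularity over $(X,0)$'' hypothesis for $f$ implies the two hypotheses of Theorem \ref{mainresult} for the $1$-form $df$, which is routine.
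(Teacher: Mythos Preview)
Your proposal is correct and follows essentially the same approach as the paper: set $\omega = df$, apply Theorem \ref{mainresult}, and then substitute the L\^e--Greuel formula \eqref{Le} for $\operatorname{Ind}_{\operatorname{GSV}}(df;X,0)$. The paper's proof is slightly more terse and does not spell out the verification that the hypotheses of Theorem \ref{mainresult} are met, but your additional bookkeeping remark is harmless and arguably clarifying.
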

\begin{proof} 
Indeed, consider the 1-form $\omega$ given by $\omega= df$. By Theorem \ref{mainresult}, we have:
\begin{equation}\label{eq_6}
\mu_{BR}(f,X)
= \mu_{BR}(df,X)=\operatorname{Ind}_{\operatorname{GSV} }(df;X,0)+\mu_0(df)-\tau_0(X).
\end{equation}
Since $\mu_0(f) =\mu_0(df)$, the proof concludes by applying formula (\ref{Le})
\[\operatorname{Ind}_{\operatorname{GSV} }(df;X,0)=\mu_0(X)+\mu_0(\phi,f)\]
and substituing this into equation (\ref{eq_6}).
\end{proof}
As a second corollary, we establish a connection between the Bruce-Roberts number $\mu_{BR}(\omega,X)$ and other indices of 1-forms along $X$.
\begin{secondcorollary}
 Let $\omega$ be a germ of a holomorphic 1-form with isolated singularity at $0\in\mathbb{C}^n$, $n\geq 2$. Let $X$ be a germ of a complex analytic hypersurface with an isolated singularity at $0\in\C^n$.
 Assume that $X$ is not invariant by $\omega$. Then
 \[\mu_{BR}(\omega,X)-\mu_0(\omega)+\tau_0(X)-\mu_0(X)-\operatorname{Eu}(\omega;X,0)=(-1)^{n-2}\bar{\chi}(M_{\ell}),\]
 where $M_{\ell}$ is the Milnor fiber of the generic linear function $\ell:\mathbb{C}^n\to\mathbb{C}$ on $X$ and $\bar{\chi}(M_{\ell})=\chi(M_{\ell})-1$. 
\end{secondcorollary}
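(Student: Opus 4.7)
The plan is to chain together three identities already established in the paper, reducing the statement to pure bookkeeping. First, I would apply Theorem \ref{mainresult} to isolate the GSV index, writing
\[
\operatorname{Ind}_{\operatorname{GSV}}(\omega;X,0) = \mu_{BR}(\omega,X) - \mu_0(\omega) + \tau_0(X).
\]
Since $(X,0)$ is a hypersurface with an isolated singularity, it is in particular an ICIS, so Proposition \ref{radial_gsv} applies and yields
\[
\operatorname{Ind}_{\operatorname{GSV}}(\omega;X,0) = \operatorname{Ind}_{\operatorname{rad}}(\omega;X,0) + \mu_0(X).
\]
Equating these two expressions for $\operatorname{Ind}_{\operatorname{GSV}}(\omega;X,0)$ gives
\[
\operatorname{Ind}_{\operatorname{rad}}(\omega;X,0) = \mu_{BR}(\omega,X) - \mu_0(\omega) + \tau_0(X) - \mu_0(X).
\]

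Next, I would invoke Proposition \ref{euler_radial} for $V=X$, where $X$ has complex dimension $n-1$ (so the sign in that proposition becomes $(-1)^{(n-1)-1} = (-1)^{n-2}$), obtaining
\[
\operatorname{Ind}_{\operatorname{rad}}(\omega;X,0) - \operatorname{Eu}(\omega;X,0) = (-1)^{n-2}\bar{\chi}(M_\ell).
\]
Substituting the previous displayed formula for $\operatorname{Ind}_{\operatorname{rad}}(\omega;X,0)$ into the left-hand side immediately yields the claimed identity.

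The proof is therefore a direct concatenation of Theorem \ref{mainresult} with Propositions \ref{radial_gsv} and \ref{euler_radial}; there is no genuine obstacle beyond tracking the sign and verifying that the hypotheses of the two propositions are met. The only point worth a moment of care is the dimension shift when applying Proposition \ref{euler_radial}: the proposition is stated for a variety of complex dimension $n$ inside $\mathbb{C}^N$, whereas here the role of that variety is played by the hypersurface $X\subset\mathbb{C}^n$ of dimension $n-1$, which is exactly what produces the exponent $(-1)^{n-2}$ in the final formula.
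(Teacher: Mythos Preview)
Your proof is correct and follows essentially the same route as the paper: apply Theorem \ref{mainresult}, then Proposition \ref{radial_gsv}, then Proposition \ref{euler_radial}, and combine. Your explicit remark about the dimension shift (that $X$ has complex dimension $n-1$, yielding the exponent $(-1)^{n-2}$) is a helpful clarification that the paper leaves implicit.
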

\begin{proof}
We have $\mu_{BR}(\omega,X)=\operatorname{Ind}_{\operatorname{GSV} }(\omega;X,0)+\mu_0(\omega)-\tau_0(X)$ by Theorem \ref{mainresult}. On the other hand, $\operatorname{Ind}_{\operatorname{GSV} }(\omega;X,0)=\operatorname{Ind}_{\operatorname{rad} }(\omega;X,0)+\mu_0(X)$ by Proposition \ref{radial_gsv}. Therefore, 
\begin{equation}\label{eq_5}
\mu_{BR}(\omega,X)=\operatorname{Ind}_{\operatorname{rad} }(\omega;X,0)+\mu_0(X)+\mu_0(\omega)-\tau_0(X).
\end{equation}
The proof concludes by applying Proposition \ref{euler_radial} to $(X,0)$ and substituting into equation (\ref{eq_5}). 
\end{proof}
We finish this section with an example where Theorem \ref{mainresult} is verified. 
\begin{example} \label{ex4}
Let $X=\{\phi=0\}$, where $\phi:(\C^3,0) \rightarrow (\C,0)$ is given by $\phi(x,y,z)=x^3+yz^2+y^3+xy^4$. Using \cite[Example 9]{Tajima2021}, we have 
\begin{align*}
    \Theta_X= \Big\langle &(3xy^2+3y^2+z^2)\frac{\partial}{\partial x}+(-3x^2-y^3)\frac{\partial}{\partial y}, \\
    &(2z^3)\frac{\partial}{\partial x}+(-6x^2z)\frac{\partial}{\partial y}+(9x^3y+9x^2y-y^2z^2)\frac{\partial}{\partial z}, \\
    &(2yz)\frac{\partial}{\partial x}+(-3x^2-y^3)\frac{\partial}{\partial z}, \\
    &(2yz)\frac{\partial}{\partial y}+(-3xy^2-3y^2-z^2)\frac{\partial}{\partial z}, \\
    & \left(-\frac{4}{3}x^2y-x \right)\frac{\partial}{\partial x}+\left(-\frac{2}{3} xy^2-y \right)\frac{\partial}{\partial y}+\left(-\frac{5}{3} xyz-z \right)\frac{\partial}{\partial z} \Big\rangle .
\end{align*}
Let $\omega=zdx+xdy+ydz$ a germ of a holomorphic $1$-form with isolated singularity at $0 \in \C^3$. Since
\begin{align*}
    \omega \wedge d\phi = &(z^3+3y^2z+4xy^3z-3x^3-xy^4) dx \wedge dy+ \\
    &(2yz^2-3x^2y-y^5) dx \wedge dz + (2xyz-yz^2-3y^3-4xy^4) dy \wedge dz.
\end{align*}
We deduce that $X$ is not invariant by $\omega$.
Using \textit{Singular} (\cite{DGPS}) to compute the indices, we verify that Theorem \ref{mainresult} holds, since $\mu_{BR}(\omega,X)=14$, $\operatorname{Ind}_{\operatorname{GSV} }(\omega;X,0)=21, \ \mu_0(\omega)=1$ and $\tau_0(X)=8$.
\end{example}

\section{Relative Bruce-Roberts number for 1-forms}
Let $\omega$ be a germ of a holomorphic 1-form with isolated singularity at $(\mathbb{C}^n,0)$ and let $X$ be a complex analytic germ with isolated singularity at $(\mathbb{C}^n,0)$. We define the \textit{relative Bruce-Roberts} of the 1-form $\omega$ with respect to $X$ as
\[
\mu_{BR}^{-}(\omega,X)=\dim_{\C}\frac{\mathcal{O}_n}{\omega(\Theta_X)+I_X}.\]
\par To establish the relationship between $\mu_{BR}(\omega,X)$ and $\mu_{BR}^-(\omega,X)$, we state the following Lemma which is analogous to \cite[Lemma 2.2]{Lima2021} and whose proof follows the same approach. Given a matrix $A$ with entries in a ring $R$, we denote by $I_k(A)$ the ideal in $R$ generated the $k\times k$ minors of $A$.
\begin{lemma}\label{lima_lema}
Let $\omega=\displaystyle\sum^n_{i=1}A_idx_i\in\Omega^1(\mathbb{C}^n,0)$ and let $g \in \mathcal{O}_n $ be such that $\dim V((\omega,dg)) = 1$ and $V(\omega) = \{0\}$ (here $(\omega):=\langle A_1,\ldots,A_n\rangle$ and $(\omega,dg)$  is the
ideal in $\mathcal{O}_n$ generated by the maximal minors of the matrix of $\left(A_i,\frac{\partial g}{\partial x_i}\right)$). Consider the following matrices
\[
A = \left(\begin{array}{ccc}
A_1 & \cdots & A_n\\
\frac{\partial g}{\partial x_1} & \cdots & \frac{\partial g}{\partial x_n}
\end{array}\right),
\quad
A' = \left(\begin{array}{cccc}
\mu & A_1 & \cdots & A_n \\
\lambda & \frac{\partial g}{\partial x_1} & \cdots &  \frac{\partial g}{\partial x_n}
\end{array}\right),
\]
where $\lambda, \mu \in \mathcal{O}_n$. Let $M, M'$ be the submodules of $\mathcal{O}^2_n$ generated by the columns of $A, A'$ respectively. If $I_2(A) = I_2(A')$ then $M = M'$.
\end{lemma}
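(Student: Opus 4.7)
The plan is to show $M' \subseteq M$ (the reverse inclusion is trivial), which amounts to verifying that the extra column $v_0 := (\mu, \lambda)^\top$ lies in $M$; equivalently, with $N := \mathcal{O}_n^2 / M$, the class $\bar v_0 := v_0 + M \in N$ should vanish. I would attack this by checking vanishing locally at every prime $\mathfrak{p} \neq \mathfrak{m}$ and then upgrading to global vanishing by a depth bound on $N$.

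\textbf{Localization step.} First I would fix a prime $\mathfrak{p} \neq \mathfrak{m}$. Since $V(\omega) = \{0\}$ forces $(A_1,\dots,A_n)$ to be $\mathfrak{m}$-primary, some $A_i$ must be a unit in $(\mathcal{O}_n)_\mathfrak{p}$; after relabeling, take $A_1$. Writing $m_{ij} := A_i \partial_j g - A_j \partial_i g$, elementary column operations on the generators of $M_\mathfrak{p}$ replace each $(A_i, \partial_i g)^\top$ ($i \geq 2$) by $(0, m_{1i}/A_1)^\top$, and the Pl\"ucker-style identity $m_{ij} = (A_i/A_1) m_{1j} - (A_j/A_1) m_{1i}$, valid in $(\mathcal{O}_n)_\mathfrak{p}$, shows that $I_2(A)\,(\mathcal{O}_n)_\mathfrak{p}$ is generated by the $m_{1i}$. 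Consequently, $(a,b)^\top \in M_\mathfrak{p}$ iff $A_1 b - a\partial_1 g \in I_2(A)(\mathcal{O}_n)_\mathfrak{p}$. Applied to $v_0$, this condition is exactly $\mu\partial_1 g - \lambda A_1 \in I_2(A)(\mathcal{O}_n)_\mathfrak{p}$, which holds by the hypothesis $I_2(A) = I_2(A')$. Hence $\bar v_0$ vanishes in $N_\mathfrak{p}$.

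\textbf{Depth step and gluing.} The other hypothesis $\dim V((\omega, dg)) = 1$ translates to $\operatorname{grade} I_2(A) = n - 1$, the maximum possible for the ideal of maximal minors of a $2 \times n$ matrix. This is precisely the Buchsbaum--Rim acyclicity hypothesis, so the Buchsbaum--Rim complex built from $A$ provides a free resolution of $N$ of length $n - 1$. Auslander--Buchsbaum then gives $\operatorname{depth}_{\mathcal{O}_n}(N) \geq n - (n-1) = 1$, i.e.\ $H^0_\mathfrak{m}(N) = 0$. Combined with the localization step, $\operatorname{ann}_{\mathcal{O}_n}(\bar v_0)$ is contained in no prime other than $\mathfrak{m}$, hence is $\mathfrak{m}$-primary, so $\bar v_0 \in H^0_\mathfrak{m}(N) = 0$; thus $v_0 \in M$, as desired.

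The main obstacle I anticipate is the depth bound: invoking Buchsbaum--Rim is conceptually clean but rests on a nontrivial theorem. A more hands-on route, following the approach of \cite[Lemma 2.2]{Lima2021}, would exploit that $(A_1,\dots,A_n)$ is a regular sequence (being $\mathfrak{m}$-primary in the regular local ring $\mathcal{O}_n$) and construct the required syzygies directly from the Koszul complex on $(A_1,\dots,A_n)$, where the antisymmetric indexing bookkeeping absorbs most of the effort.
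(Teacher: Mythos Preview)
Your argument is correct. The paper does not actually prove this lemma in the text; it only states it and says the proof ``follows the same approach'' as \cite[Lemma 2.2]{Lima2021}, which---as you yourself note in the final paragraph---works more directly with the regular sequence $(A_1,\dots,A_n)$ and explicit Koszul-type syzygy computations. Your route via Buchsbaum--Rim acyclicity and the vanishing of $H^0_{\mathfrak{m}}(N)$ is a genuine alternative: it is conceptually cleaner and makes the role of the hypothesis $\dim V((\omega,dg))=1$ completely transparent (it is exactly the generic-grade condition $\operatorname{grade} I_2(A)=n-1$ needed for the complex to be a resolution), at the price of importing heavier homological machinery. The referenced approach is more elementary but requires hands-on manipulation of the minors $m_{ij}$ and their relations. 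Both arguments ultimately hinge on the same fact---that $N=\mathcal{O}_n^2/M$ has positive depth---so the difference is one of packaging rather than substance. Your localization step is carried out cleanly and the Pl\"ucker identity you use is exactly what is needed to reduce $I_2(A)_{\mathfrak p}$ to the minors through a single unit column.
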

We can now prove a theorem similar to \cite[Theorem 2.3]{Lima2021}.

\begin{theorem}\label{relthmBR} Let $\omega=\displaystyle\sum^n_{i=1}A_i(x)dx_i$ be a germ of a holomorphic 1-form with isolated singularity at $0\in\mathbb{C}^n$, $n\geq 2$, where $A_i\in\mathcal{O}_n$. Let $X=\{\phi=0\}$ be a germ of a complex analytic hypersurface with an isolated singularity at $0\in\C^n$.
 Assume that $X$ is not invariant by $\omega$. Then
 \begin{itemize}
     \item[(i)]$\dfrac{\Theta_X}{\Theta_X^T}\cong\dfrac{\omega(\Theta_X)+I_X}{\omega(\Theta_X^T)+I_X}$;
     \item[(ii)] $\omega(\Theta_X)\cap I_X=(\omega)\cdot I_X$;
     \item[(iii)] $\dfrac{\mathcal{O}_n}{(\omega)}\cong\dfrac{\omega(\Theta_X)+I_X}{\omega(\Theta_X)}$,
 \end{itemize}
where $(\omega):=\langle A_1,\ldots,A_n\rangle$ and $I_X=\langle\phi\rangle$.
\end{theorem}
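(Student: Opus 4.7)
The plan is to prove (ii) first, since (i) and (iii) then follow by standard manipulations.

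For (ii), the inclusion $(\omega)\cdot I_X\subseteq\omega(\Theta_X)\cap I_X$ is immediate because $(\omega)\cdot I_X=\langle\phi A_1,\ldots,\phi A_n\rangle$ is precisely $\omega$ applied to the trivial vector fields $\phi\,\partial/\partial x_i\in\Theta_X^T$, and is visibly contained in $I_X$. For the reverse inclusion I would take $\alpha=\omega(\xi)=\phi c$ with $\xi=\sum v_i\,\partial/\partial x_i\in\Theta_X$, write $d\phi(\xi)=\phi h$, and apply Lemma \ref{lima_lema} with $g=\phi$ and extra column $(\mu,\lambda)=(c,h)$. The key computation is
\[
\phi\Bigl(A_j h - c\,\tfrac{\partial\phi}{\partial x_j}\Bigr)=\sum_{i=1}^{n} v_i\Bigl(A_j\,\tfrac{\partial\phi}{\partial x_i} - A_i\,\tfrac{\partial\phi}{\partial x_j}\Bigr)\in I_2(A);
\]
combined with $\phi$ being a non-zero divisor modulo $I_2(A)$, this yields $I_2(A')=I_2(A)$, so Lemma \ref{lima_lema} gives $(c,h)\in M$. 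This means $c=\sum u_iA_i\in(\omega)$, whence $\alpha=\phi c\in(\omega)\cdot I_X$.

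For (i), I would consider the natural map $\xi+\Theta_X^T\mapsto\omega(\xi)+\omega(\Theta_X^T)+I_X$, which is obviously well-defined and surjective. For injectivity, assume $\omega(\xi)=\omega(\eta)+\phi k$ with $\eta\in\Theta_X^T$; then $\phi k=\omega(\xi-\eta)\in\omega(\Theta_X)\cap I_X=(\omega)\cdot I_X$ by (ii), so $\phi k=\omega\bigl(\sum c_i\phi\,\partial/\partial x_i\bigr)$ with $\sum c_i\phi\,\partial/\partial x_i\in\Theta_X^T$. Setting $\zeta:=\xi-\eta-\sum c_i\phi\,\partial/\partial x_i\in\Theta_X$, I get $\omega(\zeta)=0\in\omega(\Theta_X^T)$, and the injectivity of the first arrow in Lemma \ref{exact1} forces $\zeta\in\Theta_X^T$, hence $\xi\in\Theta_X^T$.

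For (iii), the second isomorphism theorem combined with (ii) gives
\[
\frac{\omega(\Theta_X)+I_X}{\omega(\Theta_X)}\cong\frac{I_X}{I_X\cap\omega(\Theta_X)}=\frac{I_X}{(\omega)\cdot I_X}.
\]
Since $\phi$ is a non-zero divisor in $\mathcal{O}_n$, multiplication by $\phi$ yields an $\mathcal{O}_n$-module isomorphism $\mathcal{O}_n\xrightarrow{\sim} I_X$ that sends $(\omega)$ onto $\phi(\omega)=(\omega)\cdot I_X$, so $I_X/((\omega)\cdot I_X)\cong\mathcal{O}_n/(\omega)$, completing (iii).

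The main obstacle is the cancellation step in (ii): passing from $\phi\bigl(A_j h - c\,\tfrac{\partial\phi}{\partial x_j}\bigr)\in I_2(A)$ to $A_j h - c\,\tfrac{\partial\phi}{\partial x_j}\in I_2(A)$ requires $\phi$ to be a non-zero divisor modulo $I_2(A)$. This is precisely where the non-invariance of $X$ by $\omega$ enters, together with the dimensional hypothesis built into Lemma \ref{lima_lema}. Once this point is settled, the rest of the argument is bookkeeping with the exact sequence of Lemma \ref{exact1} and the principality of $I_X$.
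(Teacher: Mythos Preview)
Your argument is correct and largely parallels the paper's: parts (ii) and (iii) are essentially identical, both hinging on Lemma~\ref{lima_lema} together with the regularity of $\phi$ in $\mathcal{O}_n/(\omega,d\phi)$, and the second-isomorphism manipulation for (iii) is the same. The one real difference is in (i). The paper applies Lemma~\ref{lima_lema} directly to the pair $(\mu,\lambda)$, reads off $\lambda\in J\phi$, and then invokes \cite[Lemma~3.1]{Nuno2020} (a characterization of $\Theta_X^T$ in terms of $d\phi$) to conclude $\xi-\eta\in\Theta_X^T$. You instead read off the other coordinate $c\in(\omega)$ (this is your (ii)), subtract a trivial field to reduce to $\omega(\zeta)=0$, and cite the injectivity of the evaluation map in Lemma~\ref{exact1}. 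Your route has the advantage of staying internal to the paper and avoiding the external citation; on the other hand, be aware that the injectivity in Lemma~\ref{exact1} is only asserted, not verified, in the paper's proof, and its content is exactly the $\mu=0$ case of the paper's argument for (i). So the two approaches are two faces of the same application of Lemma~\ref{lima_lema} rather than genuinely distinct strategies.
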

\begin{proof}$\;$ 
\begin{itemize}
    \item[(i)] The homomorphism $\Psi : \Theta_X \to \omega(\Theta_X) + I_X$ defined by $\Psi(\xi) = \omega(\xi)$ induces the isomorphism
\[
\overline{\Psi}:\dfrac{\Theta_X}{\Theta_X^T}\to \dfrac{\omega(\Theta_X)+I_X}{\omega(\Theta_X^T)+I_X}.\]
In fact, it is enough to show that $\operatorname{Ker}(\overline{\Psi})=\Theta^T_X$. Let $\xi+\Theta_X^T\in \operatorname{Ker}(\overline{\Psi})$ then $\omega(\xi)\in\omega(\Theta_X^T)+I_X$, that is, there exist $\eta \in \Theta^T_ X$ and $\mu, \lambda \in \mathcal{O}_n$, such that
\[\omega(\xi - \eta) = \mu \phi\;\;\mbox{ and }\;\;
d\phi(\xi - \eta) = \lambda \phi,
\]
then
\[
\left(\begin{array}{c}
\mu \phi \\
\lambda \phi
\end{array}\right)
\in\left\langle
\left(\begin{array}{c}
A_i \\ \\
\dfrac{\partial \phi}{\partial x_i}
\end{array}\right):\;\;i = 1, \ldots, n\right\rangle
\]
and
\[
I_2\left(\begin{array}{cccc}
\mu \phi & A_1 & \cdots & A_n \\ &&&\\
\lambda \phi & \frac{\partial \phi}{\partial x_1} & \cdots & \frac{\partial \phi}{\partial x_n}
\end{array}\right)
=
I_2\left(\begin{array}{ccc}
A_1 & \cdots & A_n \\&&\\
\frac{\partial \phi}{\partial x_1} & \cdots & \frac{\partial \phi}{\partial x_n}
\end{array}\right)=:(\omega,d\phi).
\]
%where $I_k(A)$ is denoted as the ideal in the ring $\mathcal{O}_n$ generated by the $k\times k$ minors of the matrix $A$. 
Therefore
\[
\left|\begin{array}{cc}
\mu & A_i\\
&\\
\lambda& \dfrac{\partial \phi}{\partial x_i}
\end{array}\right| \phi
\in (\omega,d\phi)
\]
and since $\phi$ is regular in $\dfrac{\mathcal{O}_n}{(\omega, d\phi)}$ then
\[
\left|\begin{array}{cc}
\mu & A_i\\
&\\
\lambda& \dfrac{\partial \phi}{\partial x_i}
\end{array}\right|
\in (\omega,d\phi),\;i = 1, \ldots, n.
\]
By Lemma \ref{lima_lema}, $\lambda \in J\phi$ and using \cite[Lemma 3.1]{Nuno2020}, $\xi \in \Theta^T_X$. Hence, $\xi+\Theta_X^T\in\Theta_X^T$.
\item[(ii)] Let $\xi\in \omega(\Theta_X)\cap I_X$, then there exist $\eta\in \Theta_X$ and $\mu,\lambda\in \mathcal{O}_n$ such that
\[\xi=\omega(\eta) = \mu \phi\;\;\mbox{ and }\;\;
d\phi(\eta) = \lambda \phi.
\]
Using the same argument as the proof of item (i), we obtain $\mu\in (\omega)$. Hence, $\xi\in(\omega)\cap I_X$. Conversely, let $\xi\in (\omega)\cap I_X$ then there exist $\alpha_1,\ldots,\alpha_n,\mu\in\mathcal{O}_n$ such that
\[\xi=\displaystyle\sum^n_{i=1}\mu\phi\alpha_iA_i.\]
Taking \[\eta=\displaystyle\sum^n_{i=1}\mu\phi\alpha_i\dfrac{\partial }{\partial x_i}\]we obtain $\xi\in \omega(\Theta_X)\cap I_X$.
\item[(iii)] Consider the following module isomorphism:
\[\dfrac{M_1+M_2}{M_1}\cong \dfrac{M_2}{M_1\cap M_2}.\]
Then, by the previous isomorphism and item (ii), we obtain
\[\dfrac{\omega(\Theta_X)+I_X}{\omega(\Theta_X)}\cong \dfrac{I_X}{\omega(\Theta_X)\cap  I_X}=\dfrac{I_X}{(\omega)\cdot I_X}\cong\dfrac{\mathcal{O}_n}{(\omega)}.\]
\end{itemize}
\end{proof}

\begin{remark} 
If $X=\{\phi=0\}$ is a germ of a complex analytic hypersurface with an isolated singularity at $0\in\C^n$, by \cite[Theorem 2.3]{Tajima} or \cite[Corollary 3.4]{Nuno2020}, then \[\dim_{\mathbb{C}} \dfrac{\Theta_X}{\Theta_X^T} = \tau_0(X).\]
Therefore, by Theorem \ref{relthmBR} item $(i)$, we have
\[ \dim_{\mathbb{C}} \dfrac{\omega(\Theta_X) +I_X}{\omega(\Theta_X^T) +I_X} = \tau_0(X). \]
\end{remark}

\subsection{Proof of Theorem \ref{correlthmBr}}
From Theorem \ref{relthmBR} item $(iii)$, we have
\[\mu_0(\omega)=\dim_{\mathbb{C}}\dfrac{\mathcal{O}_n}{(\omega)}=\dim_{\mathbb{C}}\dfrac{\omega(\Theta_X)+I_X}{\omega(\Theta_X)}.\]
   Hence, by the exact sequence
\[0\longrightarrow\dfrac{\omega(\Theta_X)+I_X}{\omega(\Theta_X)}\longrightarrow\dfrac{\mathcal{O}_n}{\omega(\Theta_X)}\longrightarrow\dfrac{\mathcal{O}_n}{\omega(\Theta_X)+I_X}\longrightarrow0,\]
we conclude the proof of the theorem.

\begin{remark} 
If we take $\omega=df$ in the Corollary \ref{correlthmBr}, we recover the formula
\[\mu_{BR}(f,X)=\mu_0(f)+\mu^-_{BR}(f,X)\]
which is found in Section 3 of \cite{Lima2021}.
\end{remark}

\begin{corollary}\label{relmainthm}
Let $\omega$ be a germ of a holomorphic 1-form with isolated singularity at $0\in\mathbb{C}^n$, $n\geq 2$. Let $X$ be a germ of a complex analytic hypersurface with an isolated singularity at $0\in\C^n$.
 Assume that $X$ is not invariant by $\omega$. Then
    \[\mu_{BR}^-(\omega,X)=\operatorname{Ind}_{\operatorname{GSV} }(\omega;X,0)-\tau_0(X).\]    
\end{corollary}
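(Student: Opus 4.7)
The plan is to derive this corollary as an immediate algebraic consequence of the two main theorems already established in the excerpt, rather than setting up a fresh calculation. The key observation is that the right-hand side of the desired identity differs from the right-hand side of Theorem \ref{mainresult} only by the term $\mu_0(\omega)$, and exactly this term is what Theorem \ref{correlthmBr} says connects $\mu_{BR}(\omega,X)$ with $\mu_{BR}^-(\omega,X)$. So everything should slot together by subtraction.

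More precisely, I would first invoke Theorem \ref{mainresult}, under the standing hypothesis that $\omega$ has an isolated singularity at $0\in\mathbb{C}^n$, $X$ is a germ of a complex analytic hypersurface with an isolated singularity, and $X$ is not invariant by $\omega$. This gives
\[
\mu_{BR}(\omega,X)=\operatorname{Ind}_{\operatorname{GSV}}(\omega;X,0)+\mu_0(\omega)-\tau_0(X).
\]
Then I would apply Theorem \ref{correlthmBr}, whose hypotheses are the same, to write
\[
\mu_{BR}(\omega,X)=\mu_0(\omega)+\mu_{BR}^-(\omega,X).
\]
Equating the two expressions for $\mu_{BR}(\omega,X)$ and cancelling $\mu_0(\omega)$ from both sides yields the desired formula.

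There is essentially no obstacle: the proof is a one-line algebraic manipulation. The only care needed is to check that the two theorems being combined require exactly the same hypotheses, which they do (isolated singularity of $\omega$, isolated hypersurface singularity of $X$, and $X$ not invariant by $\omega$); in particular, $\mu_{BR}(\omega,X)$ is finite under these hypotheses, so the cancellation of $\mu_0(\omega)$ is legitimate as an identity of non-negative integers. Thus the corollary follows at once, and no further structural input (such as the exact sequences or the module isomorphisms of Theorem \ref{relthmBR}) is needed beyond what has already been used to prove the two theorems invoked.
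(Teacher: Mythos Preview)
Your proof is correct and follows exactly the same approach as the paper: the paper's proof consists of the single line ``The proof follows from Theorem~\ref{mainresult} and Corollary~\ref{correlthmBr},'' which is precisely the combination you describe.
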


\begin{proof} The proof follows from Theorem \ref{mainresult} and Corollary \ref{correlthmBr}.    
\end{proof}
    \begin{corollary}[Theorem 2.5 in \cite{Lima2021}] 
    Let $X$ be a germ of a complex analytic hypersurface with an isolated singularity at $0\in\C^n$ defined by $\phi:(\mathbb{C}^n,0)\to (\mathbb{C}, 0)$ and $f\in\mathcal{O}_n$ be a function germ such that $\mu_{BR}(f, X) <\infty$. Then $(\phi, f)$ defines an ICIS and
\[\mu_0(\phi,f) =\mu_{BR}^-(f,X) + \tau_0(X) - \mu_0(X).\]
        
    \end{corollary}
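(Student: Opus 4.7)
The plan is to combine Corollary \ref{relmainthm} (the identity $\mu_{BR}^-(\omega,X)=\operatorname{Ind}_{\operatorname{GSV}}(\omega;X,0)-\tau_0(X)$) with the Lê--Greuel formula (\ref{Le}), specialised to the exact case $\omega=df$. Since $\mu_{BR}(f,X)<\infty$, the function $f$ has an isolated singularity over $(X,0)$, which translates into the fact that the 1-form $df$ has an isolated zero on $X$ at the origin and, in particular, that $X$ is not invariant by $df$. Thus the hypotheses of Corollary \ref{relmainthm} are satisfied for $\omega=df$, and yield
\[\mu_{BR}^-(f,X)=\operatorname{Ind}_{\operatorname{GSV}}(df;X,0)-\tau_0(X).\]

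Before invoking (\ref{Le}), I would first justify that $(\phi,f)$ defines an ICIS, which is itself part of the claim. The finiteness hypothesis means exactly that $f|_X$ has an isolated critical point at the origin; together with the fact that $(X,0)$ is already an isolated hypersurface singularity, this forces the Jacobian matrix of $(\phi,f)$ to have maximal rank off an isolated point, so that $(\phi,f)\colon(\C^n,0)\to(\C^2,0)$ defines an ICIS of dimension $n-2$ in the sense of Hamm. This legitimises the use of the Lê--Greuel formula
\[\operatorname{Ind}_{\operatorname{GSV}}(df;X,0)=\mu_0(X)+\mu_0(\phi,f).\]

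Substituting the Lê--Greuel identity into the expression for $\mu_{BR}^-(f,X)$ and solving for $\mu_0(\phi,f)$ gives the claimed equality
\[\mu_0(\phi,f)=\mu_{BR}^-(f,X)+\tau_0(X)-\mu_0(X).\]
The argument is a direct chain of substitutions from results assembled in the preceding sections, and the main (and only nonformal) step is the verification that $(\phi,f)$ is an ICIS; this is however a standard consequence of the isolated singularity conditions on $\phi$ and on $f|_X$ that are built into the hypotheses.
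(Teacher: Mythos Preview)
Your proof is correct and follows essentially the same route as the paper: apply Corollary \ref{relmainthm} with $\omega=df$ to obtain $\mu_{BR}^-(f,X)=\operatorname{Ind}_{\operatorname{GSV}}(df;X,0)-\tau_0(X)$, then substitute the L\^e--Greuel formula (\ref{Le}). Your version is slightly more detailed in that you explicitly address why $(\phi,f)$ defines an ICIS, whereas the paper leaves this implicit.
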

    \begin{proof} Consider the 1-form $\omega$ given by $\omega= df$. By Theorem \ref{relmainthm}, we have
\[\mu_{BR}^-(df,X)=\operatorname{Ind}_{\operatorname{GSV} }(df;X,0)-\tau_0(X)\]
The proof follows by applying the L\^e-Greuel formula (see equation \eqref{Le}): 
\[\operatorname{Ind}_{\operatorname{GSV} }(df;X,0)=\mu_0(X)+\mu_0(\phi,f).\]
    \end{proof}
    
\section{The Bruce-Roberts number for foliations on $(\C^2,0)$}\label{foliations}
In this section, we study the Bruce-Roberts number for germs of holomorphic foliations on $(\C^2,0)$. A holomorphic foliation $\F$ is defined by a holomorphic 1-form
\begin{equation*}
\label{vectorfield}
\omega=A(x,y)dx+B(x,y)dy,
\end{equation*}
or by its dual vector field
\begin{equation}
\label{oneform}
v = -B(x,y)\frac{\partial}{\partial{x}} + A(x,y)\frac{\partial}{\partial{y}},
\end{equation}
where  $A, B   \in {\mathbb C}\{x,y\}$ are relatively prime. The \textit{Milnor number $\mu_0(\F)$ of $\F$ at $0\in\C^2$} is defined as the Milnor number of the 1-form $\omega=A(x,y)dx+B(x,y)dy$, i.e., 
\[\mu_0(\F)=\mu_0(\omega)=\dim_{\C}\frac{\mathcal{O}_2}{\langle A,B\rangle}.\]
\par Let $f(x,y)\in  \mathbb{C}[[x,y]]$. We say that $C: f(x,y)=0$  is {\em invariant} by $\F$ if $$\omega \wedge d f=(f\cdot h) dx \wedge dy,$$ for some  $h\in \mathbb{C}[[x,y]]$. If $C$ is irreducible, then we will say that $C$ is a {\em separatrix} of $\F$. The separatrix $C$ is analytical if $f$ is convergent.  We denote by $Sep_0(\F)$ the set of all separatrices of $\F$. When $Sep_0(\F)$ is a finite set, we will say that the foliation $\F$ is {\em non-dicritical}.  Otherwise, we will say that $\F$ is {\em dicritical}.
\par Let $X$ be a curve not invariant by $\F$.
Following \cite[Chapter 2, Section 2]{Brunella-book}, we can consider the {\it tangency order of $\F$ to $X$ at $0\in\C^2$}:
\begin{align*}
\operatorname{tang}(\F,X,0)=\dim_{\C} \frac{\mathcal{O}_2}{\langle \phi,v(\phi) \rangle},
\end{align*}
where $\{ \phi=0 \}$ is the local (reduced) equation of $X$ around $0$, and $v$ is a local holomorphic vector field generating $\F$ around $0$.
\par In dimension two, Theorem \ref{mainresult} can be established as the following corollary:
\begin{corollary} \label{dim2}
Let $\F$ be a germ of a singular foliation at $(\C^2,0)$, and let $X$ be a germ of a reduced curve at $(\C^2,0)$. Assume that $X$ is not invariant by $\F$. 
Then 
\begin{equation} \label{princ}
\mu_{BR}(\F,X)=\mu_0(\F)+\operatorname{tang}(\F,X,0)-\tau_0(X).
\end{equation}
\end{corollary}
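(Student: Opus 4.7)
The plan is to reduce the statement to Theorem \ref{mainresult} by identifying the GSV index in dimension two with the tangency order of the foliation along the curve. Since $\mu_{BR}(\F,X)$ and $\mu_0(\F)$ are defined as $\mu_{BR}(\omega,X)$ and $\mu_0(\omega)$ respectively, the hypotheses of Theorem \ref{mainresult} (holomorphic $1$-form with isolated singularity on $(\C^2,0)$, hypersurface $X$ with isolated singularity, $X$ not invariant by $\omega$) are exactly the hypotheses given here (any reduced plane curve automatically has an isolated singularity at the origin). Thus
\[
\mu_{BR}(\F,X)=\operatorname{Ind}_{\operatorname{GSV}}(\omega;X,0)+\mu_0(\F)-\tau_0(X),
\]
and the whole task is to prove $\operatorname{Ind}_{\operatorname{GSV}}(\omega;X,0)=\operatorname{tang}(\F,X,0)$.

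For this identification I would use the explicit algebraic formula for the GSV index in the $n=1$, $k=1$ case already recorded in equation \eqref{tangency}: if $\omega=A\,dx+B\,dy$ and $X=\{\phi=0\}$, then
\[
\operatorname{Ind}_{\operatorname{GSV}}(\omega;X,0)=\dim_{\C}\frac{\mathcal{O}_{\C^2,0}}{\langle\phi,\;\tfrac{\partial\phi}{\partial x}B-\tfrac{\partial\phi}{\partial y}A\rangle}.
\]
On the other hand, the dual vector field of $\omega$ is $v=-B\,\partial/\partial x+A\,\partial/\partial y$, so
\[
v(\phi)=-B\,\tfrac{\partial\phi}{\partial x}+A\,\tfrac{\partial\phi}{\partial y}=-\bigl(\tfrac{\partial\phi}{\partial x}B-\tfrac{\partial\phi}{\partial y}A\bigr).
\]
Hence the ideals $\langle \phi,v(\phi)\rangle$ and $\langle\phi,\tfrac{\partial\phi}{\partial x}B-\tfrac{\partial\phi}{\partial y}A\rangle$ in $\mathcal{O}_2$ coincide (they differ only by a unit factor $-1$ on a generator), so their quotient dimensions agree and
\[
\operatorname{Ind}_{\operatorname{GSV}}(\omega;X,0)=\operatorname{tang}(\F,X,0).
\]
Substituting this into the expression above yields \eqref{princ}.

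There is no real obstacle: the argument is essentially a bookkeeping step, reconciling the two algebraic presentations of the same colength. The only subtlety worth pointing out in the write-up is that one should verify that the hypothesis ``$X$ not invariant by $\F$'' matches ``$X$ not invariant by $\omega$'' in the sense of Theorem \ref{mainresult}, and that $\mu_{BR}(\F,X)$ is finite (equivalently $\operatorname{tang}(\F,X,0)<\infty$) follows from non-invariance for a reduced curve; this guarantees that all the numbers appearing are finite so the identity is not vacuous.
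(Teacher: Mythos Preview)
Your proposal is correct and follows essentially the same approach as the paper: apply Theorem~\ref{mainresult}, then identify $\operatorname{Ind}_{\operatorname{GSV}}(\omega;X,0)$ with $\operatorname{tang}(\F,X,0)$ by comparing the ideal in equation~\eqref{tangency} to $\langle \phi, v(\phi)\rangle$ via the dual vector field $v=-B\,\partial/\partial x+A\,\partial/\partial y$. The paper's proof is virtually identical, only presenting the two steps in the opposite order.
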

\begin{proof}
Consider the foliation $\mathcal{F}$ defined by the 1-form $\omega=A\,dx+B\,dy$, where $A, B\in\mathcal{O}_2$. Let $X$ be the complex analytic curve defined by the function $\phi:(\mathbb{C}^2,0)\to (\mathbb{C},0)$. 
From equation (\ref{tangency}) it follows that 
\[\operatorname{Ind}_{\operatorname{GSV}}(\omega;X,0)=\dim_{\mathbb{C}}\dfrac{\mathcal{O}_2}{\left\langle\phi,B\frac{\partial\phi}{\partial x}-A\frac{\partial\phi}{\partial y}\right\rangle}.\]
The vector field $v$ generating $\mathcal{F}$ is given by $v=-B\frac{\partial}{\partial x}+A\frac{\partial}{\partial y}$. Therefore, by definition,
\[\operatorname{tang}(\mathcal{F},X,0)=\dim_{\mathbb{C}}\dfrac{\mathcal{O}_2}{\langle \phi, v(\phi)\rangle}=\dim_{\mathbb{C}}\dfrac{\mathcal{O}_2}{\left\langle \phi, -B\frac{\partial \phi}{\partial x}+A\frac{\partial\phi}{\partial y}\right\rangle}=\operatorname{Ind}_{\operatorname{GSV} }(\omega;X,0).\]
Hence,
\[\mu_{BR}(\mathcal{F},X)=\operatorname{tang}(\mathcal{F},X,0)+\mu_0(\mathcal{F})-\tau_0(X),\]
by Theorem \ref{mainresult}.
\end{proof}

To proceed, we present some illustrative examples:
\begin{example} \label{ex1}
Consider the curve $X$ given by $X=\{ \phi=y^p-x^q=0 \}$, and let $\F$ be a foliation defined by the $1$-form $\omega=\lambda xdy+ydx$, with $\lambda \neq -\displaystyle\frac{p}{q}$. Then, by \cite[Example 1]{Saito}, we have:
\begin{align*}
\Theta_X=\left\langle qy \frac{\partial}{\partial y}+px \frac{\partial}{\partial x}, py^{p-1} \frac{\partial}{\partial x}+qx^{q-1} \frac{\partial}{\partial y} \right\rangle.
\end{align*}
Note that $X$ is not invariant by $\F$, since
\begin{align*}
\omega \wedge d \phi = (\lambda xdy+ydx) \wedge (py^{p-1}dy -qx^{q-1}dx) = (py^p +\lambda q x^q) \ dx \wedge dy
\end{align*}
and $\lambda \neq -\displaystyle\frac{p}{q}$. Thus, we have $\mu_0(\F)=1$, $\operatorname{tang}(\F,X,0)=pq$, and $\tau_0(X)=(p-1)(q-1)$. Computing the Bruce-Roberts number in this case, we find
\begin{align*}
\mu_{BR}(\F,X)=\dim_{\C} \frac{\mathcal{O}_2}{\omega(\Theta_X)}=p+q,
\end{align*}
which indeed satisfies equation (\ref{princ}).
\end{example}

\begin{example} \label{ex3}
Consider the foliation $\F_\omega$ (Suzuki's foliation) defined by the $1$-form
\begin{align*}
\omega=(y^3+y^2-xy)dx-(2xy^2+xy-x^2)dy,
\end{align*}
and the foliation $\F_\eta$ defined by the $1$-form
\begin{align*}
\eta=(2y^2+x^3)dx-2xydy.
\end{align*}
The foliations $\F_\omega$ and $\F_\eta$ are topologically conjugate (\cite[Chapter 2, Part 3]{CervMattei}). Using the curve $X$ from Example \ref{ex1}, with $p=7$ and $q=3$, i.e., $X=\{\phi=0\}$, with $\phi=y^7-x^3$, we have $\mu_{BR}(\F_\omega,X)=\mu_{BR}(\F_\eta,X)=17$, $\mu_0(\F_\omega)=\mu_0(\F_\eta)=5$, $\operatorname{tang}(\F_\omega,X,0)=\operatorname{tang}(\F_\eta,X,0)=24$, and $\tau_0(X)=12$. Thus, for both foliations, (\ref{princ}) is satisfied.
\end{example}

\begin{proposition} \label{tangirred}
Let $\F$ be a germ of a singular foliation at $(\C^2,0)$, and let $X$ be a germ of a reduced curve at $(\C^2,0)$ defined by $\phi: (\C^2, 0) \rightarrow (\C,0)$. Suppose that $X$ is irreducible and not invariant by $\F$. Then
\begin{align*}
    \operatorname{tang}(\F,X,0)=\text{ord}_{t=0} \ \varphi^* \omega+\mu_0(X),
\end{align*}
where $\varphi$ is a Puiseux parametrization of $X$. 
\end{proposition}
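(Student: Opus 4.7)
The plan is to translate both sides of the claimed identity into $t$-adic orders along the Puiseux parametrization $\varphi=(\varphi_1,\varphi_2)$ of the irreducible curve $X$, and then to recognize the leftover orders as the Milnor number $\mu_0(X)$.

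First I would use that, since $X$ is irreducible, the parametrization $\varphi$ induces an injection $\varphi^*\colon\mathcal{O}_{X,0}\hookrightarrow\mathbb{C}\{t\}$, from which it follows that for any germ $g\in\mathcal{O}_2$ that does not vanish identically on $X$ one has $\dim_{\C}\mathcal{O}_2/\langle \phi,g\rangle=\text{ord}_{t=0}(g\circ\varphi)$. Let $v=-B\partial_x+A\partial_y$ generate $\F$, so that $v(\phi)=A\phi_y-B\phi_x$. Then, by the definition of tangency order,
\[
\operatorname{tang}(\F,X,0)=\dim_{\C}\dfrac{\mathcal{O}_2}{\langle\phi,v(\phi)\rangle}=\text{ord}_{t=0}\bigl(v(\phi)\circ\varphi\bigr).
\]

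Second, by the chain rule one has $\varphi^*\omega=\bigl[A(\varphi)\,\varphi_1'+B(\varphi)\,\varphi_2'\bigr]\,dt$, while differentiating the identity $\phi(\varphi(t))\equiv 0$ yields the basic relation $\phi_x(\varphi)\varphi_1'+\phi_y(\varphi)\varphi_2'=0$. After a generic linear change of coordinates (harmless since $X$ is reduced and irreducible), one may assume the $y$-axis is transverse to $X$, so that $\phi_y(\varphi)\not\equiv 0$. Multiplying the coefficient of $\varphi^*\omega$ by $\phi_y(\varphi)$ and substituting the above relation gives
\[
\phi_y(\varphi)\bigl[A\,\varphi_1'+B\,\varphi_2'\bigr]=\bigl[A\,\phi_y(\varphi)-B\,\phi_x(\varphi)\bigr]\varphi_1'=\bigl(v(\phi)\circ\varphi\bigr)\cdot\varphi_1'.
\]
Taking $t$-adic orders produces the key identity
\[
\text{ord}_{t=0}\bigl(v(\phi)\circ\varphi\bigr)=\text{ord}_{t=0}\,\varphi^*\omega+\text{ord}_{t=0}(\phi_y\circ\varphi)-\text{ord}_{t=0}\,\varphi_1'.
\]

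The final step, and the main obstacle, is to identify $\text{ord}_{t=0}(\phi_y\circ\varphi)-\text{ord}_{t=0}\,\varphi_1'$ with $\mu_0(X)$. I would invoke the classical Teissier formula for an irreducible plane curve with the $y$-axis transverse, namely $i_0(\phi,\phi_y)=\mu_0(X)+\operatorname{mult}_0(\phi)-1$, which combined with $\text{ord}_{t=0}(\phi_y\circ\varphi)=i_0(\phi,\phi_y)$ and the identity $\text{ord}_{t=0}\,\varphi_1'=\operatorname{mult}_0(\phi)-1$ (valid for a standard Puiseux parametrization $\varphi_1(t)=t^{\operatorname{mult}_0(\phi)}+\cdots$) yields $\mu_0(X)=\text{ord}_{t=0}(\phi_y\circ\varphi)-\text{ord}_{t=0}\,\varphi_1'$. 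Substituting into the identity of the previous paragraph gives $\operatorname{tang}(\F,X,0)=\text{ord}_{t=0}\,\varphi^*\omega+\mu_0(X)$, as claimed. The transversality hypothesis is the one delicate point; if it failed for the $y$-axis, the symmetric argument using $\phi_x$ and $\varphi_2'$ would apply, and at least one of the two is available because $\phi$ is reduced.
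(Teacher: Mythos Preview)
Your proof is correct and follows the same route as the paper: both express $\operatorname{tang}(\F,X,0)$ as the $t$-order of $v(\phi)\circ\varphi$, use the chain-rule relation $\phi_x(\varphi)\varphi_1'+\phi_y(\varphi)\varphi_2'=0$ to link this to $\text{ord}_{t=0}\varphi^*\omega$, and then identify the leftover order terms with $\mu_0(X)$ via a Teissier-type polar formula. The only cosmetic difference is that the paper factors through $\phi_x$ and $y'(t)$ and cites the Garc\'ia Barroso--P\l oski formula $\mu_0(X)=i_0(\phi,\mathcal{P}_l(\phi))-i_0(\phi,l)+1$ with a general line $l=ay-bx$, whereas you factor through $\phi_y$ and $\varphi_1'$ and invoke the special case $l=x$ (Teissier's formula) after first normalizing coordinates so that $\{x=0\}$ is transverse to $X$.
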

\begin{proof}
Suppose that $\F$ is defined by the vector field $v = -B(x,y)\frac{\partial}{\partial{x}} + A(x,y)\frac{\partial}{\partial{y}}$. 
Then, it follows from the definition the tangency order that
\begin{align*}
\operatorname{tang}(\F,X,0)=\text{ord}_{t=0} \ \varphi^*(A \phi_y-B\phi_x),
\end{align*}
where $\varphi(t)=(x(t),y(t))$ is a Puiseux parametrization of $X$. 
Without lost of generality, we can suppose that $y(t) \neq 0$. Since
$\phi(x(t),y(t))=0$, we obtain
\[ x'(t) \ \phi_x(x(t),y(t))+y'(t) \ \phi_y(x(t),y(t))=0 \Rightarrow \phi_y(x(t),y(t))=-\frac{x'(t) \phi_x (x(t),y(t))}{y'(t)}.
\]
Hence, 
\begin{eqnarray*}
\varphi^*(A \phi_y-B\phi_x)&=&(A \phi_y-B \phi_x)(x(t),y(t)) \\
&=& A(x(t),y(t))  \phi_y (x(t),y(t))-B(x(t),y(t))  \phi_x (x(t),y(t)) \\
& =& A(x(t),y(t)) \left( -\frac{x'(t) \phi_x (x(t),y(t))}{y'(t)} \right)-B(x(t),y(t)) \phi_x (x(t),y(t)) \\
& =&-\frac{\phi_x (x(t),y(t))}{y'(t)} \Big(x'(t) A(x(t),y(t))+y'(t)B(x(t),y(t)) \Big) \\
&=&-\frac{\phi_x (x(t),y(t))}{y'(t)} [\varphi^* \omega](t) \ =  \frac{-\varphi^*(\phi_x)(t) \cdot \varphi^* \omega(t)}{y'(t)}  \\
&=&-\frac{\varphi^*(\phi_x \cdot \omega)(t)}{y'(t)}.
\end{eqnarray*}
Thus, we get
\begin{align} \label{eqtang}
\begin{split}
\operatorname{tang}(\F,X,0) &= \text{ord}_{t=0} \ \varphi^*(A \phi_y-B\phi_x) \\
&=\text{ord}_{t=0} \ (A \phi_y-B \phi_x)(x(t),y(t)) \\
&=\text{ord}_{t=0} -\frac{\varphi^*(\phi_x \cdot \omega)(t)}{y'(t)} \\
&= \text{ord}_{t=0} \ \varphi^*(\phi_x \cdot \omega) - \text{ord}_{t=0} \ y'(t) \\
&= \text{ord}_{t=0} \ \varphi^*(\phi_x \cdot \omega) - \text{ord}_{t=0} \ y(t) +1 \\
&= \text{ord}_{t=0} \ \varphi^*\phi_x+ \text{ord}_{t=0} \ \varphi^*\omega - \text{ord}_{t=0} \ y(t) +1.
\end{split}
\end{align}
Now, from \cite[Proposition 2.1, item 3]{EveliaPloski}, we obtain
\begin{equation} \label{teissier}
    \mu_0(X)=i_0(\phi,\mathcal{P}_l(\phi))-i_0(\phi,l)+1,
\end{equation}
where $\mathcal{P}_l(\phi)=\frac{\partial \phi}{\partial x} \frac{\partial l}{\partial y}-\frac{\partial \phi}{\partial y} \frac{\partial l}{\partial x}$ is the polar of $X$ with respect to $l \in \C[x,y],$ assuming that $l$ does not divide $f$. Let $l=ay-bx$. Computing the items on the right side of ($\ref{teissier}$), we have
\begin{align*}
    i_0(\phi,\mathcal{P}_l(\phi))
    &= \text{ord}_{t=0} \ \varphi^*(a \phi_x+b \phi_y) \\
    &= \text{ord}_{t=0} \ (a\phi_x(x(t),y(t))+b \phi_y(x(t),y(t))) \\
    &= \text{ord}_{t=0} \ \left( a\phi_x(x(t),y(t))-\frac{b  \phi_x(x(t),y(t))x'(t)}{y'(t)} \right) \\
    &= \text{ord}_{t=0} \ \left( \frac{a\phi_x(x(t),y(t))y'(t)-b  \phi_x(x(t),y(t))x'(t)}{y'(t)} \right) \\
    &= \text{ord}_{t=0} \ \left( \frac{\phi_x(x(t),y(t))}{y'(t)} (ay'(t)-bx'(t))\right) \\
    &= \text{ord}_{t=0} \ \varphi^* \phi_x +\text{ord}_{t=0} \ (ay'(t)-bx'(t))-\text{ord}_{t=0} \ y'(t) \\
    &= \text{ord}_{t=0} \ \varphi^* \phi_x +\min \{ \text{ord}_{t=0} \ y(t), \text{ord}_{t=0} \ x(t) \}-\text{ord}_{t=0} \ y(t),
\end{align*}
and
\begin{eqnarray*}
    i_0(\phi,l)&=& \text{ord}_{t=0} \ \varphi^* (ay-bx) \\
    &=& \text{ord}_{t=0} \ (ay(t)-bx(t)) \\
    &= &\min \{ \text{ord}_{t=0} \ y(t), \text{ord}_{t=0} \ x(t) \}.
\end{eqnarray*}
Finally, by rewriting (\ref{eqtang}) using that and (\ref{teissier}), we get
\begin{align*}
    \operatorname{tang}(\F,X,0) &=\text{ord}_{t=0} \ \varphi^*\omega+ \text{ord}_{t=0} \ \varphi^*\phi_x - \text{ord}_{t=0} \ y(t) +1 \\
    &=\text{ord}_{t=0} \ \varphi^*\omega +i_0(\phi,\mathcal{P}_l(\phi))-i_0(\phi,l)+1 \\
    &=\text{ord}_{t=0} \ \varphi^*\omega +\mu_0(X).
\end{align*}
\end{proof}
\begin{remark}
Let $X=\{\phi=0\}$ be a germ of an irreducible reduced curve and $f\in\mathcal{O}_2$ is a germ with an isolated singularity over $(X,0)$. Note that applying Proposition \ref{tangirred} to foliation $\F:\omega=df$, we obtain an expression to compute the Milnor number of the isolated complete intersection singularity defined by $(\phi,f)$ in the sense of Hamm \cite{Hamm}. Indeed, it follows from (\ref{Le}) that
\[\text{ord}_{t=0}\  \varphi^* \omega+\mu_0(X)=\operatorname{tang}(\F,X,0)=\operatorname{Ind}_{\operatorname{GSV} }(df;X,0)=\mu_0(X)+\mu_0(\phi,f).\]
Hence, $\mu_0(\phi,f)= \text{ord}_{t=0}\  \varphi^* (df)$, where $\varphi$ is a Puiseux parametrization of $X$. 
\end{remark}
\subsection{The Bruce-Roberts number of a foliation and blow-ups}
When we deal with a simple blow-up $\pi$ at $0\in\C^2$, we say that $\pi$ is \textit{dicritical} with respect to a holomorphic foliation $\F$ on $(\C^2,0)$, if the exceptional divisor $\pi^{-1}(0)$ is not $\tilde{\F}$-invariant. Otherwise, $\pi$ is called \textit{non-dicritical}.
\par  Let $X$ be a germ of a reduced curve at $(\C^2,0)$. We set \[\Omega_{X}:=\Omega^1_{\mathbb{C}^2,0}/I_X\Omega^1_{\C^2,0}+\mathcal{O}_2 dI_X\] the holomorphic 1-forms on $(X,0)$. 
\par On the following proposition, we show how the Bruce-Roberts number in dimension $2$ behaves under a blow-up. We denote by $\mu_{BR}(\F,X,p)$ the Bruce-Roberts number of $\F$ along $X$ around a neighborhood of the point $p$ (in Corollary \ref{dim2}, for example, it is defined around $0 \in \C^2$). 
\begin{proposition} \label{blowup}
Let $\F$ be a germ of a singular holomorphic foliation at $(\C^2,0)$, let $X$ be a germ of an irreducible reduced curve and let $\pi:\tilde{\C}^2 \rightarrow (\C^2,0)$ be the blow-up at $(\C^2,0)$. Assume that $X$ is not invariant by $\F$, $\tilde{\F}:=\pi^* \F$, and $q \in \pi^{-1}(0) \cap \tilde{X}$, where $\tilde{\F}$ and $\tilde{X}$ are the strict transforms of $\F$ and $X$ respectively. Denote by $m$ the multiplicity of $X$ in its Puiseux parametrization, and denote by $\nu$ the  algebraic multiplicity of the foliation $\F$ at $0\in\C^2$. Then, we have the following statements:
\begin{itemize}
\item[(a)] If $\pi$ is non-dicritical, then
\begin{align*}
\mu_{BR}(\F,X,0)&=\mu_{BR}(\tilde{\F},\tilde{X},q)+\nu^2-\nu-1+\nu m \\
&+\displaystyle \sum_{\substack{p \in \pi^{-1}(0) \\ p \neq q}}  \mu_p(\tilde{\F}) +\frac{m(m-1)}{2}-\mathcal{D};
\end{align*}
\item[(b)] If $\pi$ is dicritical, then
\begin{align*}
\mu_{BR}(\F,X,0)&=\mu_{BR}(\tilde{\F},\tilde{X},q)+\nu^2+\nu-1+(\nu+1)m \\
&+\displaystyle \sum_{\substack{p \in \pi^{-1}(0) \\ p \neq q}}  \mu_p(\tilde{\F}) +\frac{m(m-1)}{2}-\mathcal{D}.
\end{align*}
\end{itemize}
 We write $\mathcal{D}=\dim_{\C} \displaystyle\frac{\tilde{\sigma}^* \Omega_{\tilde{X}}}{\sigma^* \Omega_{X}}$, where $\tilde{\sigma}:(\overline{X},0) \rightarrow (\tilde{X},0)$ is the normalization, and $\sigma=\pi\circ\tilde{\sigma}$.
\end{proposition}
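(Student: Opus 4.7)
The plan is to reduce the asserted identity, via Corollary \ref{dim2}, to three separate blow-up formulas, one for each of the invariants $\mu_0(\F)$, $\operatorname{tang}(\F,X,0)$ and $\tau_0(X)$ that appear in the expression of $\mu_{BR}$. Applying Corollary \ref{dim2} at both $0$ and the point $q$, the proposition becomes the identity
\begin{equation*}
[\mu_0(\F)-\mu_q(\tilde{\F})]+[\operatorname{tang}(\F,X,0)-\operatorname{tang}(\tilde{\F},\tilde{X},q)]-[\tau_0(X)-\tau_q(\tilde{X})]=\text{RHS},
\end{equation*}
whose right-hand side is the one specified in item (a) in the non-dicritical case and in item (b) in the dicritical case.

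The first two brackets are handled by classical blow-up identities. For the Milnor number of a foliation at an isolated singularity of algebraic multiplicity $\nu$, I would invoke the formula (see e.g. \cite{Brunella-book})
\begin{equation*}
\mu_0(\F)=\sum_{p\in\pi^{-1}(0)}\mu_p(\tilde{\F})+\nu^2-\nu-1,
\end{equation*}
with $\nu^2-\nu-1$ replaced by $\nu^2+\nu-1$ in the dicritical case; this produces the $(\nu^2\mp\nu-1)+\sum_{p\neq q}\mu_p(\tilde{\F})$ part of the right-hand side. For the tangency, I would apply Proposition \ref{tangirred} to both $(\F,X,0)$ and $(\tilde{\F},\tilde{X},q)$ and use the factorisation $\pi^{*}\omega=u^{\nu+\epsilon}\tilde{\omega}$, with $\epsilon=0$ (non-dicritical) or $\epsilon=1$ (dicritical), where $u$ is a local equation of the exceptional divisor $E$ at $q$. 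Writing $\varphi=\pi\circ\tilde{\varphi}$ and using that $\operatorname{ord}_{t=0}\tilde{\varphi}^{*}u=i_q(\tilde{X},E)=m$ for an irreducible strict transform, the chain rule gives $\operatorname{ord}_{t=0}\varphi^{*}\omega=(\nu+\epsilon)m+\operatorname{ord}_{t=0}\tilde{\varphi}^{*}\tilde{\omega}$. Combined with the classical formula $\mu_0(X)-\mu_q(\tilde{X})=m(m-1)$ for the Milnor number of an irreducible plane curve under blow-up, this yields
\begin{equation*}
\operatorname{tang}(\F,X,0)-\operatorname{tang}(\tilde{\F},\tilde{X},q)=(\nu+\epsilon)m+m(m-1).
\end{equation*}

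The third bracket is the technical heart of the argument and, to my mind, the main obstacle. Substituting the previous two identities, the claim reduces to the blow-up formula for the Tjurina number
\begin{equation*}
\tau_0(X)-\tau_q(\tilde{X})=\frac{m(m-1)}{2}+\mathcal{D}.
\end{equation*}
Unlike $\mu_0(X)$, the Tjurina number is not a topological invariant, so the correction is genuinely analytic and must be captured by the module $\Omega_X$. To prove the formula I would work in a chart $(u,v)$ at $q$ with $x=u$, $y=uv$, where $\phi\circ\pi=u^m\tilde{\phi}$, and use the induced relations
\begin{equation*}
\phi_y\circ\pi=u^{m-1}\tilde{\phi}_v,\qquad\phi_x\circ\pi=u^{m-1}(m\tilde{\phi}+u\tilde{\phi}_u-v\tilde{\phi}_v),
\end{equation*}
to compare the colengths of the ideals $(\phi,\phi_x,\phi_y)$ and $(\tilde{\phi},\tilde{\phi}_u,\tilde{\phi}_v)$ via Lemma \ref{newlema61}; pulling the comparison back to the common normalization through $\sigma=\pi\circ\tilde{\sigma}$ identifies the remaining analytic discrepancy with $\mathcal{D}$. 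Finally, substituting the three blow-up identities into the equation of the first paragraph and using $m(m-1)-m(m-1)/2=m(m-1)/2$ yields exactly the right-hand sides of (a) and (b).
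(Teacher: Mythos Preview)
Your proposal is correct and follows essentially the same route as the paper: apply Corollary \ref{dim2} (together with Proposition \ref{tangirred}) at both $0$ and $q$, then feed in blow-up formulas for $\mu_0(\F)$, for $\operatorname{ord}_{t=0}\varphi^*\omega$, for $\mu_0(X)$, and for $\tau_0(X)$. The paper cites \cite{CanoCerv} for the Milnor-number formula and \cite{FPGBSM} for the order formula, but these are the same identities you write down.

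The one substantive difference is your treatment of the Tjurina term. You flag $\tau_0(X)-\tau_q(\tilde{X})=\tfrac{m(m-1)}{2}+\mathcal{D}$ as the ``technical heart'' and sketch a derivation via local charts and Lemma \ref{newlema61}; the paper, by contrast, simply imports this identity from \cite{Wang}. Your sketch is plausible but not needed here, and the use of Lemma \ref{newlema61} for this step is not really the natural tool (that lemma separates a common factor $g$ from generators, which is not quite the structure of the Tjurina comparison). Since the formula is in the literature, you can just cite it and shorten the argument considerably.
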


\begin{proof}
Let $\omega$ be the $1$-form defining $\F$ and let $\tilde{\omega}$ be the $1$-form defining $\tilde{\F}$. Applying Proposition \ref{tangirred} to the Corollary \ref{dim2}, we have

\begin{equation} \label{newbrob}
\mu_{BR}(\F,X,0)=\mu_0(\F)+\operatorname{ord}_{t=0} \varphi^* \omega +\mu_0(X)-\tau_0(X)
\end{equation}
and similarly,
\begin{align*}
\mu_{BR}(\tilde{\F},\tilde{X},q)=\mu_q(\tilde{\F})+\operatorname{ord}_{t=q} \tilde{\varphi}^* \tilde{\omega}+\mu_q(\tilde{X})-\tau_q(\tilde{X}),
\end{align*}
where $\varphi$ and $\tilde{\varphi}$ are the Puiseux parametrization of $X$ and $\tilde{X}$, respectively. 

The proof is obtained using results that show how the indexes on the right side of equation (\ref{newbrob}) change through a blow-up. From \cite[Proposition 4.13]{CanoCerv}, we have
\begin{align*} 
\mu_0(\F)= \left\{ \begin{array}{cc}
\nu^2-(\nu+1)+\displaystyle\sum_{p \in \pi^{-1}(0)} \mu_p(\tilde{\F}) & \text{if} \ \pi \ \text{is non-dicritical;} \\
(\nu+1)^2-(\nu+2)+\displaystyle\sum_{p \in \pi^{-1}(0)} \mu_p(\tilde{\F}) & \text{if} \ \pi \ \text{is dicritical.}
\end{array} \right.
\end{align*}
and, from \cite[p. 4]{Wang}, we have
\begin{align*} 
\mu_0(X)-\mu_q(\tilde{X})=m(m-1) \ \text{and} \ \tau_0(X)-\tau_q(\tilde{X})=\displaystyle\frac{m(m-1)}{2}+\mathcal{D}.
\end{align*}
Now, from \cite[Section 2]{FPGBSM}, evaluating the $1$-form that defines $\tilde{\F}$ in $\tilde{\varphi}$ and taking orders, we get
\begin{align*}
\operatorname{ord}_{t=0} \varphi^* \omega= \left\{ \begin{array}{cc}
\nu m+ \operatorname{ord}_{t=q} \tilde{\varphi}^* \tilde{\omega} & \text{if} \ \pi \ \text{is non-dicritical;} \\
(\nu+1) m+\operatorname{ord}_{t=q} \tilde{\varphi}^* \tilde{\omega} & \text{if} \ \pi \ \text{is dicritical.}
\end{array} \right.
\end{align*}
Substituting the above formulas into (\ref{newbrob}), we conclude the proof of  the Proposition.
\end{proof}
The following examples illustrate  Proposition \ref{blowup}:
\begin{example}
Let $\F$ be a foliation defined by the $1$-form $\omega=2xdy-3ydx$, and let $X$ be the curve given by $X=\{ \phi=y^2-x^5=0 \}$. Observe that $X$ is not invariant by $\F$. If $\pi$ is a blow-up at $0\in\C^2$, in local coordinates, we have
\begin{align*}
\tilde{\F}_1 &=\pi^* \F=x(-tdx+2xdt) \ \ \text{and} \\
\tilde{\F}_2 &=\pi^* \F=y(-3ydu-udy),
\end{align*}
where $\tilde{\F}_1$ is obtained using the local chart $\pi(x,t)=(x,tx)$, and $\tilde{\F}_2$ is obtained using the local chart $\pi(u,y)=(uy,y)$. Now, we consider $\tilde{X}=\{ t^2-x^3=0 \}$, since
\begin{align*}
\phi \circ  \pi(x,t)=\phi(x,tx)=x^2(t^2-x^3).
\end{align*}
Since $\tilde{X}$ is not invariant by $\tilde{\F}_1$, set $\tilde{\F}=\tilde{\F}_1$. Thus, $\tilde{\F}$ is defined by the $1$-form $\tilde{\omega}$, given by \[\tilde{\omega}=-tdx+2xdt.\] 
Using Singular (\cite{DGPS}), we get $\mu_{BR} (\F,X,0)=7$, $\mu_{BR} (\tilde{\F},\tilde{X},q)=5$, $\nu=1$, $m=2$, $\displaystyle\sum_{\substack{p \in \pi^{-1}(0) \\ p \neq q}}  \mu_p(\tilde{\F})=\mu_0(\tilde{\F}_2)=1$ and $\mathcal{D}=1$. Note that
\begin{align*}
\mu_{BR}(\F,X,0) &=\mu_{BR}(\tilde{\F},\tilde{X},q)+\nu^2-\nu-1+\nu m + \mu_0(\tilde{\F}_2) +\frac{m(m-1)}{2}-\mathcal{D} \\
&=5+1-1-1+2 \cdot 1+1+1-1=7.
\end{align*}
Since $\pi$ is non-dicritical, item $(a)$ of Proposition \ref{blowup} is satisfied.
\end{example}

\begin{example}
Consider the foliation $\F$ defined by the $1$-form \[\omega=(2x^7+5y^5)dx-xy^2(5y^2+3x^5)dy,\] and let $X$ be the curve not invariant by $\F$ given by $X=\{ \phi=y^3-x^7=0 \}$. Again, by the local of the coordinates of the blow-up $\pi$ at $0\in\C^2$, we have
\begin{align*}
\tilde{\F}_1 &=\pi^* \F=x^5(-t^3dx-(2xt^2+t-1)dt) \ \ \text{and} \\
\tilde{\F}_2 &=\pi^* \F=y^5((y+1-u)du-udy),
\end{align*}
where $\tilde{\F}_1$ and $\tilde{\F}_2$ are obtained using the local charts $\pi(x,t)=(x,tx)$ and $\pi(u,y)=(uy,y)$, respectively. Set $\tilde{X}=\{ t^3-x^4=0 \}$, since
\begin{align*}
\phi \circ  \pi(x,t)=\phi(x,tx)=x^3(t^3-x^4).
\end{align*}
Since $\tilde{X}$ is not invariant by $\tilde{\F}_1$, set $\tilde{\F}=\tilde{\F}_1$. In that way, $\tilde{\F}$ is defined by the $1$-form \[\tilde{\omega}=-t^3dx-(2xt^2+t-1)dt.\] Again, by Singular \cite{DGPS}, we obtain $\mu_{BR} (\F,X,0)=56$, $\mu_{BR} (\tilde{\F},\tilde{X},q)=9$, $\nu=5$, $m=3$, $\displaystyle\sum_{\substack{p \in \pi^{-1}(0) \\ p \neq q}}  \mu_p(\tilde{\F})=\mu_0(\tilde{\F}_2)=0$ and $\mathcal{D}=3$. Then,
\begin{eqnarray*}
\mu_{BR}(\F,X,0) &=& \mu_{BR}(\tilde{\F},\tilde{X},q)+\nu^2+\nu-1+(\nu+1) m + \mu_0(\tilde{\F}_2) +\frac{m(m-1)}{2}-\mathcal{D} \\
&=&9+25+5-1+6 \cdot 3+0+3-3=56.
\end{eqnarray*}
Since $\pi$ is dicritical, item $(b)$ of Proposition \ref{blowup} is satisfied.
\end{example}

\subsection{The relative Bruce-Roberts number for foliations on $(\C^2,0)$}
For holomorphic foliations on $(\C^2,0)$, Theorem \ref{correlthmBr} can be state as follows:
\begin{corollary}\label{red2}
Let $\F$ be a germ of a singular holomorphic foliation at $0\in\mathbb{C}^2$. Let $X$ be a germ of a reduced curve at  $0\in\C^2$.
 Assume that $X$ is not invariant by $\F$. Then
    \[\mu_{BR}(\F,X)=\mu_0(\F)+\mu_{BR}^-(\F,X).\]
\end{corollary}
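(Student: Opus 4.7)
The plan is to obtain Corollary \ref{red2} as a direct specialization of Theorem \ref{correlthmBr} to the ambient dimension $n=2$, using the dictionary between a foliation $\F$ on $(\C^2,0)$ and its defining $1$-form. Recall that $\F$ is given by $\omega = A(x,y)\,dx + B(x,y)\,dy$ with $A,B \in \mathcal{O}_2$ relatively prime, and by definition
\[
\mu_0(\F) = \mu_0(\omega), \qquad \mu_{BR}(\F,X) = \mu_{BR}(\omega,X), \qquad \mu_{BR}^-(\F,X) = \mu_{BR}^-(\omega,X).
\]
Thus the statement to prove is identical to the equality
\[
\mu_{BR}(\omega,X) = \mu_0(\omega) + \mu_{BR}^-(\omega,X)
\]
provided by Theorem \ref{correlthmBr}, once we check that its hypotheses are met.

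First I would verify that $\omega$ has an isolated singularity at $0 \in \C^2$. Since by convention $A$ and $B$ are taken relatively prime in $\mathcal{O}_2$, the zero set $\{A=B=0\}$ is zero-dimensional, so $\mu_0(\omega) < \infty$ and $0$ is an isolated singularity of $\omega$. Next, I would note that any reduced curve germ $X \subset (\C^2,0)$ is a complex analytic hypersurface whose singular locus is automatically an isolated point at the origin (curves have pure codimension one, so the reducedness assumption gives $X_{\mathrm{sing}} \subset \{0\}$). Finally, the assumption that $X$ is not invariant by $\F$ is, by the very definition of invariance, the statement that $T_pX \subset \ker(\omega_p)$ fails at some regular point of $X$, i.e.\ $X$ is not invariant by the $1$-form $\omega$.

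With all hypotheses of Theorem \ref{correlthmBr} verified, its conclusion can be applied verbatim to $\omega$, yielding
\[
\mu_{BR}(\F,X) \;=\; \mu_{BR}(\omega,X) \;=\; \mu_0(\omega) + \mu_{BR}^-(\omega,X) \;=\; \mu_0(\F) + \mu_{BR}^-(\F,X),
\]
which is the desired identity. There is no real obstacle here: the whole content is the translation of the $n=2$ hypersurface case into foliation language, so the proof is essentially a one-line invocation of Theorem \ref{correlthmBr} together with the three routine observations above.
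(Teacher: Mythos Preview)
Your proposal is correct and matches the paper's approach exactly: the paper does not even write out a proof for this corollary, presenting it simply as the restatement of Theorem \ref{correlthmBr} in the setting of foliations on $(\C^2,0)$. Your verification of the hypotheses (isolated singularity of $\omega$ from $\gcd(A,B)=1$, hypersurface with isolated singularity from reducedness of the curve, and non-invariance) is exactly the routine checking the paper leaves implicit.
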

The following examples illustrate Corollary \ref{red2}:
\begin{example}
    We can use Singular \cite{DGPS} to compute the relative Bruce-Roberts number for the previous examples:
    \begin{itemize}
        \item In Example \ref{ex1}, we have $\F:\omega=\lambda xdy+ydx=0, \ \lambda \neq -p/q$, and $X=\{ \phi=y^p-x^q=0 \}$. As we needed numerical examples, for $\lambda=1$, we have
        \begin{align*}
            &\mu_{BR}^-(\F,X)=6, \ \text{when} \ p=2 \ \text{and} \ q=5, \ \text{and} \\ &\mu_{BR}^-(\F,X)=23, \ \text{when} \ p=11 \ \text{and} \ q=13.
        \end{align*}
        In both cases, $\mu_{BR}^-(\F,X)=p+q-1=\mu_{BR}(\F,X)-\mu_0(\F)$. \\
         \item In Example \ref{ex3}, we have $\F_{\omega}:\omega=(y^3+y^2-xy)dx-(2xy^2+xy-x^2)dy=0$, $\F_{\eta}:\eta=(2y^2+x^3)dx-2xydy=0$ and $X=\{ \phi=y^7-x^3=0 \}$, with $\mu_{BR}(\F_{\omega},X)=\mu_{BR}(\F_{\eta},X)=17$ and $\mu_0(\F_{\omega})=\mu_0(\F_{\eta})=5$. Then
        \begin{align*}
            \mu_{BR}^-(\F_{\omega},X)=\mu_{BR}^-(\F_{\eta},X)=12.
        \end{align*}
  
    \end{itemize}
    In each case, we have $\mu_{BR}^-(\F,X)=\mu_{BR}(\F,X)-\mu_0(\F)$, satisfying Corollary \ref{red2}.
\end{example}
\par An interesting property for the Milnor number of a foliation $\F$ is 
\begin{equation}\label{nu1}
\mu_0(\F)\geq \frac{\nu(\nu+1)}{2},
\end{equation}
where $\nu$ denotes the algebraic multiplicity of $\F$, see for instance \cite[p. 1440]{Genzmer-Mol}.
\par As an application of the above inequality, we have
\begin{corollary}
Let $\F$ be a germ of a singular holomorphic foliation at $0\in\mathbb{C}^2$. Let $X$ be a germ of a reduced curve at  $0\in\C^2$.
 Assume that $X$ is not invariant by $\F$. Then
 \[\mu_{BR}(\F,X)-\mu_{BR}^-(\F,X)\geq \frac{\nu(\nu+1)}{2},\]
 where $\nu$ denotes the algebraic multiplicity of $\F$.
\end{corollary}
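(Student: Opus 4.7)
The plan is to combine the two ingredients that the excerpt explicitly hands us just before the statement. First I would rewrite the left-hand side using Corollary \ref{red2}, which asserts that
\[
\mu_{BR}(\F,X) \;=\; \mu_0(\F) \;+\; \mu_{BR}^{-}(\F,X).
\]
Rearranging this identity gives $\mu_{BR}(\F,X) - \mu_{BR}^{-}(\F,X) = \mu_0(\F)$, reducing the inequality entirely to a statement about the Milnor number of $\F$ at the origin. Note that Corollary \ref{red2} requires exactly the hypotheses we have: $\F$ has an isolated singularity at the origin (implicit in being a foliation defined by relatively prime coefficients) and $X$ is reduced and not $\F$-invariant, so $\mu_{BR}^{-}(\F,X)$ is finite.

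Next I would invoke inequality \eqref{nu1}, recalled just before the corollary, which states $\mu_0(\F) \geq \nu(\nu+1)/2$, where $\nu$ is the algebraic multiplicity of $\F$ at $0$. Chaining this with the previous equality yields
\[
\mu_{BR}(\F,X) - \mu_{BR}^{-}(\F,X) \;=\; \mu_0(\F) \;\geq\; \frac{\nu(\nu+1)}{2},
\]
which is the desired bound.

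There is essentially no obstacle here; the statement is a direct corollary of two results that have already been established. The only thing worth being careful about is to verify that the hypotheses of Corollary \ref{red2} and of the classical bound \eqref{nu1} both hold in our setting, which they do: we are assuming $X$ is a reduced curve that is not $\F$-invariant, and $\F$ is a germ of a singular holomorphic foliation with $\nu \geq 1$. Thus the proof is merely a substitution and requires no further argument.
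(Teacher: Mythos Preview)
Your proposal is correct and follows exactly the approach the paper intends: apply Corollary \ref{red2} to rewrite $\mu_{BR}(\F,X)-\mu_{BR}^{-}(\F,X)=\mu_0(\F)$, and then invoke inequality \eqref{nu1}. The paper states the corollary immediately after recalling \eqref{nu1} precisely because this substitution is all that is needed.
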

To end this subsection, we present a blow-up formula for the relative Bruce-Roberts number of a foliation $\F$ with respect to a non-invariant curve $X$.
\begin{corollary}\label{bruce}
Let $\F$ be a germ of a singular holomorphic foliation at $(\C^2,0)$, let $X$ be a germ of an irreducible reduced curve and let $\pi:\tilde{\C}^2 \rightarrow (\C^2,0)$ be the blow-up at $(\C^2,0)$. Assume that $X$ is not invariant by $\F$, $\tilde{\F}:=\pi^* \F$, and $q \in \pi^{-1}(0) \cap \tilde{X}$, where $\tilde{\F}$ and $\tilde{X}$ are the strict transforms of $\F$ and $X$ respectively. Denote by $m$ the multiplicity of $X$ in its Puiseux parametrization, and denote by $\nu$ the  algebraic multiplicity of the foliation $\F$ at $0\in\C^2$. Then, we have the following statements:
\begin{itemize}
\item[(a)] If $\pi$ is non-dicritical, then
\begin{align*}
\mu_{BR}^-(\F,X,0)&=\mu_{BR}^-(\tilde{\F},\tilde{X},q)+\nu m +\frac{m(m-1)}{2}-\mathcal{D};
\end{align*}
\item[(b)] If $\pi$ is dicritical, then
\begin{align*}
\mu_{BR}^-(\F,X,0)&=\mu_{BR}^-(\tilde{\F},\tilde{X},q)+(\nu+1)m +\frac{m(m-1)}{2}-\mathcal{D}.
\end{align*}
\end{itemize}
We write $\mu_{BR}^-(\F,X,p)$ to denote the relative Bruce-Roberts number around the point $p$. Moreover, we have $\mathcal{D}=\dim \displaystyle\frac{\tilde{\sigma}^* \Omega_{\tilde{X}}}{\sigma^* \Omega_{X}}$, where $\tilde{\sigma}:(\overline{X},0) \rightarrow (\tilde{X},0)$ is the normalization, and $\sigma=\pi\circ\tilde{\sigma}$.
\end{corollary}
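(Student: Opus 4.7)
The plan is to reduce the statement to the blow-up formulas already established in Proposition \ref{blowup}, using Corollary \ref{red2} to translate between $\mu_{BR}$ and $\mu_{BR}^-$. Applying Corollary \ref{red2} both at $0 \in \C^2$ and at $q \in \pi^{-1}(0) \cap \tilde{X}$ gives
\[ \mu_{BR}^-(\F,X,0) - \mu_{BR}^-(\tilde{\F},\tilde{X},q) = \bigl[\mu_{BR}(\F,X,0) - \mu_{BR}(\tilde{\F},\tilde{X},q)\bigr] - \bigl[\mu_0(\F) - \mu_q(\tilde{\F})\bigr], \]
so the problem reduces to evaluating the two brackets on the right-hand side.

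The first bracket is supplied directly by Proposition \ref{blowup}, in both the non-dicritical case (a) and the dicritical case (b). For the second bracket, I would reuse the classical blow-up formula for the foliation Milnor number from \cite[Proposition 4.13]{CanoCerv}, already invoked in the proof of Proposition \ref{blowup}: in the non-dicritical case one has $\mu_0(\F) = \nu^2 - (\nu+1) + \sum_{p \in \pi^{-1}(0)} \mu_p(\tilde{\F})$, which gives $\mu_0(\F) - \mu_q(\tilde{\F}) = \nu^2 - \nu - 1 + \sum_{p \neq q} \mu_p(\tilde{\F})$; in the dicritical case, $\mu_0(\F) = (\nu+1)^2 - (\nu+2) + \sum_{p \in \pi^{-1}(0)} \mu_p(\tilde{\F})$, which yields $\mu_0(\F) - \mu_q(\tilde{\F}) = \nu^2 + \nu - 1 + \sum_{p \neq q} \mu_p(\tilde{\F})$.

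Substituting these expressions into the displayed identity, I expect the $\nu^2$ terms, the linear-in-$\nu$ contributions, and the Milnor contributions $\sum_{p \neq q} \mu_p(\tilde{\F})$ coming from Proposition \ref{blowup} to cancel precisely against those arising from $\mu_0(\F) - \mu_q(\tilde{\F})$, leaving exactly $\nu m + \tfrac{m(m-1)}{2} - \mathcal{D}$ in case (a) and $(\nu+1)m + \tfrac{m(m-1)}{2} - \mathcal{D}$ in case (b), as claimed. The argument is essentially a bookkeeping exercise aligning the two families of blow-up formulas, so I do not anticipate any genuine obstruction; the only delicate point is tracking signs and summation ranges carefully so that the cancellation is exact in both cases.
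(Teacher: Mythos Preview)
Your proposal is correct and matches the paper's approach: the paper's proof simply states that the result follows from the ingredients in the proof of Proposition \ref{blowup} together with Corollary \ref{red2}, which is exactly the reduction you carry out. Your explicit bookkeeping of the cancellation is accurate in both the non-dicritical and dicritical cases.
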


\begin{proof}
    The proof follows similarly, utilizing the results from the proof of Proposition \ref{blowup} together with Corollary \ref{red2}.
\end{proof}

\subsection{Generalized curve foliations and the Bruce-Roberts numbers}
 We say that $0\in\C^2$ is a \textit{reduced} singularity for $\F$ if the linear part $\text{D}v(0)$ of the vector field $v$ in (\ref{oneform}) is non-zero and has eigenvalues $\lambda_1,\lambda_2\in\C$ fitting in one of the cases:
\begin{enumerate}
\item[(i)] $\lambda_1\lambda_2\neq 0$ and $\frac{\lambda_1}{\lambda_2}\not\in\mathbb{Q}^{+}$ (\textit{non-degenerate});
\item[(ii)] $\lambda_1\neq 0$ and $\lambda_2\neq 0$ (\textit{saddle-node singularity}).
\end{enumerate}
In the case $(i)$, there is a system of coordinates  $(x,y)$ in which $\F$ is defined by the equation
\begin{equation}
\label{non-degenerate}
\omega=x(\lambda_1+a(x,y))dy-y(\lambda_2+b(x,y))dx,
\end{equation}
where $a(x,y),b(x,y)  \in {\mathbb C}[[x,y]]$ are non-units, so that  $Sep_0(\F)$ is formed by two
transversal analytic branches given by $\{x=0\}$ and $\{y=0\}$. In the case $(ii)$, up to a formal change of coordinates, the  saddle-node singularity is given by a 1-form of the type
\begin{equation}
\label{saddle-node-formal}
\omega = x^{k+1} dy-y(1 + \lambda x^{k})dx,
\end{equation}
where $\lambda \in \mathbb{C}$ and $k \in \mathbb{Z}^{+}$ are invariants after formal changes of coordinates (see \cite[Proposition 4.3]{martinetramis}).
The curve $\{x=0\}$   is an analytic separatrix, called {\em strong} separatrix, whereas $\{y=0\}$  corresponds to a possibly formal separatrix, called {\em weak} separatrix. The integer $k+1>1$ is called \textit{tangency index} of $\F$ with respect to the weak separatrix.
\par According to \cite[p. 144]{CLS}, a \textit{generalized curve foliation} is a foliation $\F$ defined by a vector field $v$ whose reduction of singularities (\cite{seidenberg}) admits only non-degenerate singularities with non-vanishing eigenvalues. Assuming that $\F$ is non-dicritical at $0\in\C^2$, and denoting $C=Sep_0(\F)$ the union of the separatrices of $\F$, Camacho-Lins Neto-Sad proved the following theorem:
\begin{theorem}{\cite[Theorem 4]{CLS}}\label{camacho}
Given a germ of a non-dicritical holomorphic foliation $\F$ at $0\in\C^2$ one has $\mu_0(\F)\geq\mu_0(C)$, and the equality holds if and only if $\F$ is a generalized curve foliation.  
\end{theorem}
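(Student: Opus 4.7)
The plan is to prove both the inequality and the equality characterization simultaneously by induction on the length of the reduction of singularities of $\mathcal{F}$, provided by Seidenberg's theorem. The base case treats a reduced singular point directly: at a non-degenerate reduced point one computes $\mu_{0}(\mathcal{F}) = 1$ and $C$ is the union of two transversal smooth branches, so $\mu_{0}(C) = 1$ and equality holds trivially; at a saddle-node with tangency index $k + 1 \geq 2$ along the weak separatrix, $\mu_{0}(\mathcal{F}) = k + 1$ while the analytic separatrix set satisfies $\mu_{0}(C) \leq 1$, yielding strict inequality. This simultaneously verifies the inequality and exhibits the saddle-node as the local obstruction to being a generalized curve.

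For the inductive step, let $\pi \colon \widetilde{M} \to (\mathbb{C}^{2}, 0)$ denote the blow-up at the origin and $\widetilde{\mathcal{F}} = \pi^{*}\mathcal{F}$ its strict transform, with singular points $p_{1}, \dots, p_{s}$ on the exceptional divisor $\mathcal{E}$. Non-dicriticalness guarantees three facts: $\mathcal{E}$ is $\widetilde{\mathcal{F}}$-invariant, each reduction of $\widetilde{\mathcal{F}}_{p_{i}}$ is strictly shorter than that of $\mathcal{F}$, and $\mathrm{mult}_{0}(C) = \nu + 1$, the last because the tangent cone of $C$ is cut out by $i_{R}\omega_{\nu}$, where $R = x\partial_{x} + y\partial_{y}$ and $\omega_{\nu}$ is the $\nu$-jet of $\omega$. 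The technical core is to combine the Cano--Cerveau blow-up formula
\[ \mu_{0}(\mathcal{F}) = \nu^{2} - \nu - 1 + \sum_{i=1}^{s} \mu_{p_{i}}(\widetilde{\mathcal{F}}) \]
with the plane-curve identity $\mu_{0}(C) = 2\delta_{0}(C) - r + 1$ and the additivity $\delta_{0}(C) = \binom{\nu+1}{2} + \sum_{i} \delta_{p_{i}}(\widetilde{C}_{p_{i}})$ of the delta invariant under blow-ups, and then to match the non-exceptional local separatrices of $\widetilde{\mathcal{F}}_{p_{i}}$ with the strict transforms of the branches of $C$ through $p_{i}$ (a matching guaranteed by non-dicriticalness), so that the branch-count corrections cancel against the multiplicity contribution. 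The outcome should be a blow-up difference identity of the form
\[ \mu_{0}(\mathcal{F}) - \mu_{0}(C) = \sum_{i=1}^{s} \bigl[\mu_{p_{i}}(\widetilde{\mathcal{F}}) - \mu_{p_{i}}(\widetilde{C}_{p_{i}})\bigr]. \]

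The inductive hypothesis applied at each $p_{i}$ gives $\mu_{p_{i}}(\widetilde{\mathcal{F}}) \geq \mu_{p_{i}}(\widetilde{C}_{p_{i}})$, hence $\mu_{0}(\mathcal{F}) \geq \mu_{0}(C)$; moreover equality at $0$ forces equality at every $p_{i}$ and, by induction, the absence of any saddle-node throughout the reduction of $\mathcal{F}$, which is precisely the generalized-curve condition. The main obstacle will be the bookkeeping of branches in the curve-side computation: reconciling the Noether contribution $\binom{\nu+1}{2} = \tfrac{(\nu+1)\nu}{2}$, the global branch count $r$ of $C$, and the local counts $r_{i}$ of $\widetilde{C}_{p_{i}}$, so that the overall constant matches the foliation-side $\nu^{2} - \nu - 1$ exactly. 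Non-dicriticalness is used essentially here, both to secure the multiplicity identity $\mathrm{mult}_{0}(C) = \nu + 1$ and to guarantee that every local separatrix of $\widetilde{\mathcal{F}}_{p_{i}}$ transverse to $\mathcal{E}$ is the strict transform of an analytic branch of $C$, so that no separatrix is lost in passing to the blown-up model.
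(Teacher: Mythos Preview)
The paper does not prove this theorem; it is quoted from \cite{CLS} and only used as input for Corollary~\ref{generalized}. So there is no ``paper's proof'' to compare against, and your sketch must stand on its own.

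Your inductive scheme and the base-case analysis are correct, but the inductive step contains a genuine error: the assertion $\mathrm{mult}_{0}(C)=\nu+1$ is \emph{false} for general non-dicritical foliations. The polynomial $i_{R}\omega_{\nu}$ cuts out the tangent \emph{directions} of the separatrices, but its vanishing orders need not match the multiplicities of the branches of $C$; only the radical of $i_{R}\omega_{\nu}$ gives the (reduced) tangent cone. Concretely, take $k\geq 1$ and
\[
\omega=\bigl[x^{k}y+(1+\lambda)y^{k+1}\bigr]\,dx-\bigl[x^{k+1}+\lambda\,xy^{k}\bigr]\,dy.
\]
One blow-up reduces $\mathcal{F}$: at the point corresponding to $\{y=0\}$ one finds a saddle-node with the exceptional line as \emph{weak} separatrix, and at the point corresponding to $\{x=0\}$ a non-degenerate singularity. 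Thus $\mathcal{F}$ is non-dicritical with $C=\{xy=0\}$, so $m=2$, while $\nu=k+1$ and $i_{R}\omega_{\nu}=xy^{k+1}$; hence $m=2<\nu+1=k+2$. Your blow-up ``difference identity'' therefore fails: redoing your bookkeeping with the correct $m$ yields
\[
\mu_{0}(\mathcal{F})-\mu_{0}(C)=\sum_{i}\bigl[\mu_{p_{i}}(\widetilde{\mathcal{F}})-\mu_{p_{i}}(\widetilde{C}_{p_{i}})\bigr]+\bigl[\nu(\nu-1)-(m-1)(m-2)\bigr],
\]
and the extra bracket is nonzero precisely when $m\neq\nu+1$. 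The equality $m=\nu+1$ is in fact \emph{equivalent} (at every infinitely near point) to $\mathcal{F}$ being a generalized curve---this is part of Theorem~3 in \cite{CLS}---so invoking it here is circular. A correct argument must first establish the inequality $m\leq\nu+1$ (this relies on the separate fact, Theorem~2 of \cite{CLS}, that the reduction of $\mathcal{F}$ also resolves $C$), then observe that the extra bracket is nonnegative and vanishes exactly when $m=\nu+1$, and finally carry both this term and the inductive sum through the equality analysis.
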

As a consequence of these results, we can establish a new characterization of non-dicritical generalized curve foliations:
\begin{corollary}\label{generalized}
Let $\F$ be a germ of a non-dicritical holomorphic foliation at $0\in\mathbb{C}^2$. Let $X$ be a germ of a reduced curve at  $0\in\C^2$, and $C=Sep_0(\F)=\{f=0\}$ be a reduced equation of $Sep_0(\F)$. 
 Assume that $X$ is not invariant by $\F$. Then
 \[\mu_{BR}(\F,X)-\mu_{BR}(f,X)\geq \mu^{-}_{BR}(\F,X)-\mu^{-}_{BR}(f,X),\]
and the equality holds if and only if $\F$ is a generalized curve foliation.
\end{corollary}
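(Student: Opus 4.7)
The plan is to reduce the stated inequality, and its equality characterization, to the Camacho--Lins Neto--Sad inequality (Theorem~\ref{camacho}) by applying the splitting $\mu_{BR}=\mu_0+\mu^-_{BR}$ of Corollary~\ref{red2} twice: once to the foliation $\F$ and once to the Hamiltonian foliation defined by $df$.

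First I would apply Corollary~\ref{red2} directly to $\F$ to obtain
\[\mu_{BR}(\F,X)-\mu^-_{BR}(\F,X)=\mu_0(\F).\]
Next I want to apply the analogous identity noted after Theorem~\ref{correlthmBr},
\[\mu_{BR}(f,X)-\mu^-_{BR}(f,X)=\mu_0(f),\]
but for this I must check that the hypotheses are met by $df$. Since $\F$ is non-dicritical, the separatrix set $C=Sep_0(\F)$ is a \emph{finite} union of irreducible branches with reduced defining equation $f$; hence $f$ has an isolated plane-curve singularity, $df$ has an isolated zero at $0$, and $\mu_0(f)=\mu_0(C)$. Moreover, a germ of curve $X$ through the origin is invariant by $df$ precisely when $f|_X\equiv 0$, i.e.\ when $X\subseteq C$; but $X\subseteq C$ would force $X$ to be invariant by $\F$, contradicting the hypothesis. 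Hence $X$ is not invariant by $df$ and the second identity is valid.

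Subtracting the two relations gives
\[\bigl[\mu_{BR}(\F,X)-\mu_{BR}(f,X)\bigr]-\bigl[\mu^-_{BR}(\F,X)-\mu^-_{BR}(f,X)\bigr]=\mu_0(\F)-\mu_0(C),\]
and Theorem~\ref{camacho} states exactly that the right-hand side is $\geq 0$, with equality if and only if $\F$ is a generalized curve foliation. This yields both the inequality and the characterization of equality claimed in the corollary.

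No serious obstacle arises: the argument is a formal combination of the previously established Corollary~\ref{red2} with Theorem~\ref{camacho}. The only delicate point is the verification above that the function-case identity applies, which boils down to observing that in the non-dicritical setting the separatrix curve $C$ coincides set-theoretically with $\{f=0\}$, so that ``$X$ not invariant by $\F$'' is equivalent to ``$X\not\subseteq\{f=0\}$'', hence to ``$X$ not invariant by $df$.''
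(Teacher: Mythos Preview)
Your proof is correct and follows essentially the same approach as the paper: both arguments apply Corollary~\ref{red2} (the splitting $\mu_{BR}=\mu_0+\mu_{BR}^{-}$) to $\F$ and to $df$, subtract, and then invoke Theorem~\ref{camacho}. You are simply more careful than the paper in verifying that the hypotheses of the function-case identity hold for $df$ (isolated critical point of $f$ from reducedness of $C$ in the non-dicritical case, and $X$ not invariant by $df$ because $X\not\subseteq C$).
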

\begin{proof}
The proof follows from Corollary \ref{red2} and Theorem \ref{camacho} apply to $\F$ and $f$ respectively, since
\[\mu_{BR}(\F,X)-\mu^{-}_{BR}(\F,X)=\mu_0(\F)\geq \mu_0(C)=\mu_{BR}(f,X)-\mu^{-}_{BR}(f,X).\]
\end{proof}
\section{Applications to global foliations}\label{global}
 \par Let $S$ be a compact complex surface and let $\{U_j\}_{j\in I}$ be an open covering of $S$. A \textit{holomorphic foliation} $\mc{F}$ on $S$ can be described by a collection of holomorphic 1-forms $\omega_j\in\Omega^{1}_{S}(U_{j})$ with isolated zeros such that 
\begin{equation*}
\omega_i=g_{ij}\omega_j\,\,\,\,\,\,\,\,\,\text{on}\,\,\,U_i\cap U_j,\,\,\,\,\,\,\,\,\,\,\,\,\,\,g_{ij}\in\mathcal{O}^{*}_{S}(U_i\cap U_j).
\end{equation*}
The \textit{singular set} $\sing(\mc{F})$ of $\mc{F}$ is the finite subset of $S$ defined by 
$$\sing(\mc{F})\cap U_{j}=\text{zeros of}\,\,\,\omega_{j},\,\,\,\,\,\,\,\,\,\,\,\,\,\,\,\forall j\in I.$$
A point $q\not\in\sing(\mc{F})$ is said to be \textit{regular}. The cocycle $\{g_{ij}\}$ defines a line bundle $N_{\mc{F}}$ on $S$, called \textit{the normal bundle} of $\mc{F}$.  The dual $N^{*}_{\F}$ is called \textit{conormal bundle} of $\F$. The foliation gives rise to a global holomorphic section of $N_\F\otimes T^{*}S$, with finite zero set and modulo multiplication by $\mathcal{O}^{*}_S(S)$, and to an exact sequence
\[0\to N^{*}_{\F}\to T^{*}S\to \mathcal{I}_Z\cdot T^{*}_{\F}\to 0\]
where $T^{*}_{\F}$ is a line bundle on $S$, called \textit{canonical bundle} of $\F$, and $\mathcal{I}_Z$ is an ideal sheaf supported on $\sing(\F)$. The dual $T_{\F}$ of $T^{*}_\F$, is called \textit{the tangent bundle} of $\F$.
These line bundles are related to each other via the canonical bundle $K_S$ of $S$:
\[K_S=T^{*}_\F\otimes N^{*}_\F.\]
Now, we can state the following theorem:
\begin{theorem}\label{global_1}
Let $\F$ be a holomorphic foliation on a compact complex surface $S$, and let $X\subset S$ be a compact curve, none of whose components are invariant by $\F$. Then
\[\sum_{p\in\sing(\F)\cap X}\mu_{BR}(\F,X,p)=T_\F \cdot T_{\F}+T_{\F}\cdot K_S+c_2(S)+N_{\F}\cdot X-\chi(X)-\tau(X),\]
where $\chi(X)=-K_{S}\cdot X-X\cdot X$ is the virtual Euler characteristic of $X$, and $\displaystyle\tau(X)=\sum_{p\in X}\tau_p(X)$ is the global Tjurina number of $X$. 
\end{theorem}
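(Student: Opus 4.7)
The plan is to globalize the pointwise identity from Corollary \ref{dim2}. At every point $p\in X$ for which the three quantities below are defined (that is, $\omega$ has at most an isolated zero at $p$ and $X$ is not $\F$-invariant there), Corollary \ref{dim2} yields
\[
\mu_{BR}(\F,X,p)=\mu_p(\F)+\operatorname{tang}(\F,X,p)-\tau_p(X).
\]
Each of the three terms on the right has finite support on $X$: $\mu_p(\F)$ is supported on $\sing(\F)\cap X$, $\operatorname{tang}(\F,X,p)$ on the tangency locus of $\F$ with $X$, and $\tau_p(X)$ on the singular locus of $X$. Summing the identity over these finitely many points reduces the theorem to evaluating the three resulting global sums, the third of which is by definition $\tau(X)$.

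For $\sum_p\mu_p(\F)$ I would view the foliation as determined by a global twisted $1$-form $\omega\in H^0(S,\Omega^1_S\otimes N_\F)$ whose zero set is $\sing(\F)$ with local multiplicities $\mu_p(\F)$. Since $\Omega^1_S\otimes N_\F$ has rank two, the Poincar\'e--Hopf/Baum--Bott formula yields
\[
\sum_{p\in\sing(\F)}\mu_p(\F)=c_2(\Omega^1_S\otimes N_\F)=c_2(S)+K_S\cdot N_\F+N_\F\cdot N_\F.
\]
Invoking the structural identity $K_S=T^{*}_\F\otimes N^{*}_\F$, equivalently $T_\F+N_\F+K_S=0$ in $\operatorname{Pic}(S)$, this simplifies to $T_\F\cdot T_\F+T_\F\cdot K_S+c_2(S)$, which accounts for the first three terms of the target formula.

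For $\sum_p\operatorname{tang}(\F,X,p)$ I would invoke Brunella's global tangency formula \cite[Chapter 2, Proposition 2]{Brunella-book}: since no component of $X$ is $\F$-invariant,
\[
\sum_p\operatorname{tang}(\F,X,p)=X\cdot X-T_\F\cdot X=(N_\F+K_S)\cdot X+X\cdot X=N_\F\cdot X-\chi(X),
\]
where the last equality uses the stated identity $\chi(X)=-(K_S+X)\cdot X$. Substituting the three global evaluations into the summed local identity gives precisely $T_\F\cdot T_\F+T_\F\cdot K_S+c_2(S)+N_\F\cdot X-\chi(X)-\tau(X)$, as required. The only real obstacle I foresee is careful bookkeeping of the relation $T_\F+N_\F+K_S=0$ when converting between $N_\F$ and $T_\F$ expressions; once signs are tracked, the combination is immediate.
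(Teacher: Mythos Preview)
Your proposal is correct and follows essentially the same route as the paper: apply Corollary \ref{dim2} pointwise, sum, and replace the totals $\sum_p\mu_p(\F)$ and $\sum_p\operatorname{tang}(\F,X,p)$ by the corresponding intersection-theoretic expressions. The only cosmetic difference is that the paper invokes \cite[Propositions 2.1 and 2.2]{Brunella-book} directly in the forms $\sum_p\mu_p(\F)=T_\F\cdot T_\F+T_\F\cdot K_S+c_2(S)$ and $\sum_p\operatorname{tang}(\F,X,p)=N_\F\cdot X-\chi(X)$, whereas you sketch their derivation via $c_2(\Omega^1_S\otimes N_\F)$ and the identity $T_\F+N_\F+K_S=0$; the substance is identical.
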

\begin{proof}
For each $p\in \sing(\F)\cap X$, we have
\begin{equation}\label{eq_final}
\mu_{BR}(\F,X,p)=\mu_p(\F)+\operatorname{tang}(\F,X,p)-\tau_p(X).
\end{equation}
by Corollary \ref{dim2}. On the other hand, according to \cite[Propositions 2.1 and 2.2]{Brunella-book} we get
\[\sum_{p\in\sing(\F)}\mu_p(\F)=T_\F \cdot T_{\F}+T_{\F}\cdot K_S+c_2(S)\]
and 
\[\sum_{p\in\sing(\F)\cap X}\operatorname{tang}(\F,X,p)=N_{\F}\cdot X-\chi(X).\]
We complete the proof by summig over $p\in\sing(\F)\cap X$ and substituting the above formulas into equation (\ref{eq_final}). 
\end{proof}
\par We have the following corollary. 
\begin{corollary}\label{inv}
Let $\F$ be a holomorphic foliation on a compact complex surface $S$, and let $X\subset S$ be a compact curve, none of whose components are invariant by $\F$. Then
\[\tau(X)\leq T_\F \cdot T_{\F}+T_{\F}\cdot K_S+c_2(S)+N_{\F}\cdot X-\chi(X).\]
\end{corollary}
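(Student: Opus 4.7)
The plan is to read the inequality directly off Theorem \ref{global_1}. That theorem already supplies the identity
\[\sum_{p\in\sing(\F)\cap X}\mu_{BR}(\F,X,p)=T_\F\cdot T_\F+T_\F\cdot K_S+c_2(S)+N_\F\cdot X-\chi(X)-\tau(X),\]
so all that remains is to argue that the left-hand side is a non-negative integer; the claimed inequality then follows by rearrangement and dropping a non-negative term.

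The verification is entirely local. Since $\sing(\F)$ is finite, so is $\sing(\F)\cap X$, and the sum is really a finite sum. At each $p\in\sing(\F)\cap X$, the hypothesis that no irreducible component of $X$ is invariant by $\F$ tells us in particular that no branch of the germ $(X,p)$ lies in a leaf of $\F$; equivalently, the germ $(X,p)$ is not invariant by the local 1-form $\omega$ defining $\F$ near $p$. By the remark following the definition of $\mu_{BR}(\omega,X)$ in the introduction, this is exactly the condition guaranteeing that $\mu_{BR}(\F,X,p)$ is finite. As $\mu_{BR}(\F,X,p)$ is by construction $\dim_{\C}\mathcal{O}_{S,p}/\omega(\Theta_{X,p})$, it is automatically a non-negative integer.

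Substituting the non-negativity of the sum into the identity above yields
\[\tau(X)=T_\F\cdot T_\F+T_\F\cdot K_S+c_2(S)+N_\F\cdot X-\chi(X)-\sum_{p\in\sing(\F)\cap X}\mu_{BR}(\F,X,p)\le T_\F\cdot T_\F+T_\F\cdot K_S+c_2(S)+N_\F\cdot X-\chi(X),\]
which is the desired bound. There is no real obstacle: the entire geometric content has been packaged in Theorem \ref{global_1}, and the only thing one must take care of is the translation of the global non-invariance assumption into the pointwise non-invariance needed to make each local Bruce-Roberts number finite (and hence a well-defined non-negative integer).
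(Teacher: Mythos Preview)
Your proof is correct and follows exactly the same approach as the paper: apply Theorem \ref{global_1} and use that each local Bruce-Roberts number $\mu_{BR}(\F,X,p)$ is a non-negative integer. The paper's own proof is in fact terser than yours, simply citing $\mu_{BR}(\F,X,p)\geq 0$ without the extra justification you supply about finiteness.
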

\begin{proof}
Since $\mu_{BR}(\F,X,p)\geq 0$ for all $p\in\sing(\F)\cap X$, we obtain an upper bound for the global Tjurina number of $X$:
\[\tau(X)\leq T_\F \cdot T_{\F}+T_{\F}\cdot K_S+c_2(S)+N_{\F}\cdot X-\chi(X),\]
by Theorem \ref{global_1}.
\end{proof}
\par In the particular case, $S=\mathbb{P}_{\C}^2$, $\F$ a foliation on $\mathbb{P}^2_{\C}$ of degree $\deg(\F)=d$, and $X\subset\mathbb{P}_{\C}^2$ an algebraic curve of $\deg(X)=r$, we get 
\[\tau(X)\leq d^2+d+1+r(d+r-1).\] 
 
In \cite[Theorem 3.2]{duPlessis-Wall},  a upper bound for $\tau(X)$ is derived under the assumption that there exists a holomorphic foliation on $\mathbb{P}^2$
  that leaves $X$ invariant. In contrast, in Corollary \ref{inv}, the bound for the global Tjurina number of $X$ is established using a holomorphic foliation that does not leave 
$X$ invariant.

\par On the other hand, since \[\sum_{p\in\sing(\F)}\mu_p(\F)=T_\F \cdot T_{\F}+T_{\F}\cdot K_S+c_2(S)\] by \cite[Proposition 2.1]{Brunella-book}, we have the following corollary as a consequence of Corollary \ref{red2}.
\begin{corollary}
Let $\F$ be a holomorphic foliation on a compact complex surface $S$, and let $X\subset S$ be a compact curve, none of whose components are invariant by $\F$. Then
\[\sum_{p\in\sing(\F)\cap X}\left[\mu_{BR}(\F,X,p)-\mu_{BR}^{-}(\F,X,p)\right]=T_\F \cdot T_{\F}+T_{\F}\cdot K_S+c_2(S).\]
\end{corollary}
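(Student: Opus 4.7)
The proof is essentially a global summation of Corollary \ref{red2} combined with Brunella's formula for the sum of local Milnor numbers of a foliation on a compact surface, mirroring exactly the strategy used in the proof of Theorem \ref{global_1}.

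My first step is to apply Corollary \ref{red2} locally at each $p \in \sing(\F)\cap X$. Since no component of $X$ is invariant by $\F$, the local hypotheses of Corollary \ref{red2} are satisfied at every such $p$, so
\[\mu_{BR}(\F,X,p) - \mu_{BR}^{-}(\F,X,p) = \mu_p(\F).\]
Summing this identity over $p \in \sing(\F)\cap X$, the left-hand side is exactly the expression appearing in the statement of the corollary, while the right-hand side becomes $\displaystyle\sum_{p\in\sing(\F)\cap X} \mu_p(\F)$.

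The second and final step is to invoke Brunella's formula \cite[Proposition 2.1]{Brunella-book}, which gives
\[\sum_{p\in\sing(\F)} \mu_p(\F) = T_\F\cdot T_\F + T_\F\cdot K_S + c_2(S).\]
Combining the two identities then yields the asserted equality. The only subtlety worth noting—and indeed the same one implicit in the proof of Theorem \ref{global_1}—is that Brunella's sum ranges over all of $\sing(\F)$, while our summation is restricted to $\sing(\F)\cap X$; this is handled by the same convention as in Theorem \ref{global_1}, under which singularities away from $X$ contribute trivially to both sides after appropriate extension of the local Bruce--Roberts numbers.

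I do not expect any serious obstacle: the content of the corollary is nearly a direct corollary of two results already established (Corollary \ref{red2} locally, and Brunella's global formula), and the main task is simply to chain them together cleanly. The only mildly delicate point is the bookkeeping of summation ranges mentioned above, which is resolved the same way as in the earlier theorem in this section.
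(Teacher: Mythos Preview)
Your proposal is correct and follows essentially the same approach as the paper: the paper simply states that the corollary is a consequence of Corollary~\ref{red2} together with Brunella's formula \cite[Proposition~2.1]{Brunella-book}, which is precisely the two-step argument you outline. Your observation about the discrepancy in summation ranges (all of $\sing(\F)$ versus $\sing(\F)\cap X$) is apt and is indeed handled by the same convention implicit in the proof of Theorem~\ref{global_1}.
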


\end{document}